\providecommand{\U}[1]{\protect\rule{.1in}{.1in}}
\newtheorem{theorem}{Theorem}
\theoremstyle{plain}
\newtheorem{corollary}{Corollary}
\newtheorem{definition}{Definition}
\newtheorem{example}{Example}
\newtheorem{lemma}{Lemma}
\newtheorem{proposition}{Proposition}
\newtheorem{remark}{Remark}
\newtheorem{question}{Question}
\numberwithin{equation}{section}
\begin{document}
\title[{\normalsize On annihilator multiplication modules}]{{\normalsize On annihilator multiplication modules}}
\author{Suat Ko\c{c}}
\address{Marmara University, Department of Mathematics, Istanbul, T\"{u}rkiye.}
\email{suat.koc@marmara.edu.tr}
\subjclass[2000]{13A15, 13C11, 13C99, 16E50}
\keywords{multiplication module, torsion-free module, injective module, von neumann regular ring, annihilator multiplication module.}
\begin{abstract}
An $A$-module $E$ is said to be an  \textit{annihilator multiplication module} if for each $e\in E$, there exists a finitely generated ideal $I$ of $A$ such that $ann(e)=ann(IE)$. This class of modules is quite large, as it contains multiplication modules, von Neumann regular modules, finitely generated Baer modules, torsion-free modules, and simple modules. This article provides a comprehensive investigation into the algebraic properties of annihilator multiplication modules, and establishes new characterizations for several important classes of rings/modules, including multiplication modules, torsion-free modules, simple modules, uniserial modules, injective modules and Noetherian von Neumann regular rings. Furthermore, we present a construction method using trivial extensions to produce annihilator multiplication rings that are not multiplication rings. In addition, we prove that, for such modules, the equality $Ass_{A}(E)=Ass(A)$ holds, thereby providing a precise connection between module-theoretic and ring-theoretic prime structures. Finally, we provide various examples to demonstrate the above equality may fail if the condition of being annihilator multiplication module is omitted.

\end{abstract}
\maketitle

\section{Introduction}

The fundamental interplay between the structural properties of a ring and those of its modules is a cornerstone of commutative algebra. A central theme in this area is identifying classes of modules that mirror the ideal structure of their underlying rings. The concept of \textbf{multiplication modules}, first introduced by Barnard in \cite{Barnard} and further developed by El-Bast and Smith in \cite{Elbast}, has long served as a crucial bridge in this context. An $A$-module $E$ is said to be a \textit{multiplication module} if every submodule $V$ of $E$ is of the form $IE$ for some ideal $I$ of $A$. Particularly, an ideal $I$ of a ring $A$ is called a \textit{multiplication ideal} if it is a multiplication $A$-module, and also $A$ is said to be a \textit{multiplication ring} if each of its ideals is a multiplication ideal \cite{Larsen}. In such modules, the algebraic properties of $V$ are frequently compatible with those of the ideal $(V:_{A}E)=ann_{A}(E/V)$, allowing for a rich translation of ring-theoretic results into the language of module theory. For this reason, multiplication modules have become a standard tool for researchers seeking to extend classical ring results to a broader module-theoretic setting (See, for example, \cite{Ali}, \cite{Ameri}, \cite{ArAnTeKo}, \cite{Azizi}, \cite{CeTeKo}, \cite{MacMo}, \cite{Ones}, \cite{Tuganbaev}). Another factor showing the importance of multiplication modules/rings is that these class of rings/modules contains simple modules, cyclic modules, von Neumann regular rings/modules, and also they are used in the characterization of various significiant rings/modules. For instance, arithmetical rings (a ring $A$ is said to be an \textit{arithmetical ring} if $A_{\frak{m}}$ is a valuation ring for each maximal ideal $\frak{m}$ of $A$ \cite{Larsen}) are characterized by means of multiplication ideals as follows: a ring $A$ is an arithmetical ring if and only if its each finitely generated ideal is a multiplication ideal (See, \cite[Page 151, Exercise 18]{Larsen}). Furthermore, a fundamental structural property of multiplication modules is their significance in spectral theory. Specifically, multiplication modules offer a natural framework for extending the classical Zariski topology, which is traditionally defined on the prime spectrum of a ring, to the spectrum of a module. More precisely, while the collection of varieties defined on the spectrum of a general module typically fails to satisfy the axioms for closed sets, the multiplication module property ensures that these varieties form a well-defined topology, which is known as quasi-Zariski topology \cite{MacMo}.

Nevertheless, the concept of a multiplication module is limited by its restrictive defining conditions. As a result, it excludes several important structures in module theory, including certain torsion-free modules, free modules of $rank\geq 2$, vector spaces of $dimension\geq 2$, etc. Also, the multiplication module is sometimes insufficient for establishing essential correspondences between rings and modules, especially for Baer structures, since the classical multiplication module property does not adequately connect Baer modules with their underlying Baer rings. In 2022, Jayaram, Tekir, and Koç introduced annihilator multiplication modules to fully reflect the relationship between Baer modules and Baer rings. By overcoming the restrictions of multiplication modules, annihilator multiplication modules provide an extensive framework including multiplication modules, torsion-free modules, free modules (and thus vector spaces), and finitely generated Baer modules. The fundamental distinction of this approach lies in establishing an identity based on the annihilators of factor modules, rather than defining submodules through a direct equality $V=IE$. In this paper, we aim to deepen the theory by investigating this class as a significant entity in its own right. Beyond examining annihilator multiplication modules under various algebraic structures, we demonstrate how this module class can be utilized to characterize several important modules and rings. Furthermore, we prove that annihilator multiplication modules establish a connection between the associated primes of the module and those of the underlying ring. We now begin by providing some notions and notations regarding rings and modules that will be used in the sequel.

Throughout the paper, we focus only on commutative rings with a nonzero identity and nonzero unital modules. Let $A$ always represents such a ring and $E$ represents such an $A$-module. A ring $A$ (not necessarily commutative) is defined as \textit{von Neumann regular} if, for each $a$ in $A$, there exists $x$ in $A$ such that $a = a x a$ \cite{von}. Von Neumann originally introduced these rings due to their connection with continuous geometry \cite{von}. Their algebraic significance has become increasingly recognized in recent years. In 2018, Jayaram and Tekir extended the concept of von Neumann regular rings to modules by introducing $E$-von Neumann regular and weak idempotent elements. Specifically, an element $a$ in $A$ is said to be an \textit{$E$-von Neumann regular} (or \textit{weak idempotent}) if $aE = a^2E$ (or $a - a^2 \in ann_{A}(E)$). An $A$-module $E$ is called a \textit{von Neumann regular} (for short, \textit{vn-regular}) \textit{module} if, for each $e$ in $E$, $Ae = aE$ for some $E$-von Neumann regular element $a\in A$ \cite{JaTe}. It is easy to see that every vn-regular module (vn-regular ring) is also a multiplication module (a multiplication ring). Furthermore, Jayaram and Tekir demonstrated that a finitely generated $A$-module $E$ is vn-regular if and only if, for each $e$ in $E$, there exists a weak idempotent $a$ in $A$ such that $Ae = aE$ \cite[Lemma 5]{JaTe}. In functional analysis, Baer $*$-rings serve as the algebraic counterparts of von Neumann algebras. Every von Neumann algebra is a Baer $*$-ring because the right annihilator of any subset is generated by a projection. This relationship underscores the structural importance of projections in operator algebras and positions Baer $*$-rings as an algebraic framework for the study of annihilators and projection lattices. Kaplansky defined a \textit{Baer ring} as a ring $A$ in which the annihilator $ann(S) = \{a \in A : aS = 0\}$ of any subset $S$ of $A$ is generated by an idempotent $b$ in $A$ \cite{Kaplansky}. Subsequently, Kist refined this definition by considering $S$ as a singleton, rather than an arbitrary subset. According to Kist \cite{Kist}, a ring $A$ is a \textit{Baer ring} (also referred to as p.q. Baer or P.P. ring) if, for every $a$ in $A$, there exists an idempotent $b$ in $A$ such that $ann(a) = bA$. The present paper adopts Kist's definition of a Baer ring. Let $V$ be a submodule of $E$ and $K$ a nonempty subset of $E$. \textit{The residual} of $V$ by $K$ is denoted by $(V:_{A}K)=\left\{a\in A:aK\subseteq V\right\}$. When $V=(0)$ and $K$ is a submodule of $E$, we prefer $ann(K)$ to denote $((0):K)$. In particular, for each $e\in E$, we use $ann(e)$ instead of $ann(Ae)$. An $A$-module $E$ is said to be a \textit{torsion-free} if $ann(e)=0$ for every $0\neq e\in E$ \cite{Sharp}. $E$ is said to be a \textit{faithful module} if $ann(E)=0$. It is clear that all torsion-free modules are faithful, and the converse is not true in general. For instance, $%
\mathbb{Z}
$-module $%
\mathbb{Z}
\oplus%
\mathbb{Z}
_{n}$, where $n\geq2$ is a faithful module which is not torsion-free. Recall from \cite{JaTeKo} that an $A$-module $E$ is said to be a \textit{Baer module} if for each $e\in E$ there exists a weak idempotent $a\in A$ such that $ann(e)E=aE$. Also, they defined an $A$-module $E$ to be an \textit{annihilator multiplication} if for each $e\in E$, there exists a finitely generated ideal $I$ of $A$ such that $ann(e)=ann(IE)$. The authors in \cite[Proposition 2.8]{JaTeKo} showed that  every finitely generated von Neumann regular module is a Baer module, and also they proved that every finitely generated Baer module is also an annihilator multiplication module (See, \cite[Lemma 2.1]{JaTeKo}). Furthermore, \cite{JaTeKo} established that annihilator multiplication modules provide a link between the theory of Baer modules and Baer rings. Specifically, a finitely generated $A$-module $E$ is a Baer module if and only if $E$ is an annihilator multiplication module and $A/ann(E)$ is a Baer ring \cite[Theorem 2.14]{JaTeKo}. 

In this article, we aim to investigate the comprehensive algebraic properties of annihilator multiplication modules. In Section 2, we analyze the relationships between annihilator multiplication modules and other classical modules, including multiplication modules, von Neumann regular modules, finitely generated Baer modules, torsion-free modules, and simple modules (see Example \ref{prop1} and Example \ref{ex1}). We also examine the stability of annihilator multiplication modules under direct products, in direct sums, under localizations, under homomorphisms, in factor modules, and polynomial modules (see Proposition \ref{direct}, Proposition \ref{pdirectsum}, Proposition \ref{quotient}, Proposition \ref{phom}, Corollary \ref{csub}, Proposition \ref{ppol}). For an ideal $I$ of $A$, we say that $I$ is an \textit{annihilator multiplication ideal} if it is an annihilator multiplication $A$-module. Also, we define a ring $A$ as an \textit{annihilator multiplication ring} if its each ideal is an annihilator multiplication ideal. We investigate certain annihilator multiplication ideals and multiplication ideals in trivial ring extension (See, Theorem \ref{tri1}, Theorem \ref{tri2} and Theorem \ref{tri3}). In terms of these results, we characterize simple modules, and also we provide a construction method using trivial ring extensions to produce annihilator multiplication rings that are not multiplication rings (See, Theorem \ref{tri3} and Example \ref{construction}). In Section 3, we study 1-absorbing prime ideals and classical 1-absorbing prime submodules in annihilator multiplication modules (see Proposition \ref{1-abs} and Proposition \ref{pclass}). Also, we characterize some significiant classes of rings and modules, such as torsion-free modules, multiplication modules, injective modules, uniserial modules and Noetherian von Neumann regular rings, using annihilator multiplication modules (See, Theorem \ref{ttorsion}, Proposition \ref{pmult}, Proposition \ref{pinj}, Corollary \ref{uniserial} and Theorem \ref{tvon}). Finally, we prove that $Ass_{A}(E)=Ass(A)$ in annihilator multiplication modules, where $Ass_{A}(E)$ and $Ass(A)$ denote the sets of associated primes of $E$ and $A$, respectively (see Theorem \ref{tassoc}). Also, we provide various examples to demonstrate the above equality may fail if the condition of being annihilator multiplication module is omitted (See, Example \ref{exassociated}).

\section{Basic properties of annihilator multiplication modules}

\begin{definition}
An $A$-module $E$ is said to be an annihilator
multiplication module if for each $e\in E$, there exists a finitely generated ideal $I$ of $A$ such that $ann(e)=ann(IE)$ \cite{JaTeKo}. In particular, an ideal $I$ of $A$ is an annihilator multiplication ideal if it is an annihilator multiplication $A$-module. Also, a ring $A$ is said to be an annihilator multiplication ring if its each ideal is an annihilator multiplication ideal.
\end{definition}

Now, we start by giving some examples of annihilator multiplication modules.

\begin{example}
\label{prop1}(i) Every multiplication module is an annihilator multiplication module. In particular, every vn-regular module is an annihilator multiplication module.

(ii) Every finitely generated Baer module is an annihilator multiplication module.

(iii) Every torsion-free module is an annihilator multiplication module.

(iv) Every simple module is an annihilator multiplication module.
\end{example}

\begin{proof}
$(i):\ $Let $E$ be a multiplication module and choose $e\in E$. Then we have $Ae=(Ae:E)E$. Thus we can write $e=a_{1}e_{1}+a_{2}e_{2}+\cdots+a_{n}e_{n}$ for some $a_{1},a_{2},\ldots,a_{n}\in(Ae:E)$. Now, put $I=%
{\textstyle\sum\limits_{i=1}^{n}}
Aa_{i}$. Let $v=be\in Ae$. Then we have $v=be=a_{1}(be_{1})+a_{2}(be_{2})+\cdots+a_{n}(be_{n})\in IE$ which implies that $Ae\subseteq
IE\subseteq(Ae:E)E\subseteq Ae$. Thus, we have $Ae=IE$ for some finitely generated ideal $I$ of $A$, and thus we get $ann(e)=ann(IE)$, that is, $E$ is an annihilator multiplication module. The rest follows from the fact that every von Neumann regular module is a multiplication module.

$(ii):\ $Follows from \cite[Lemma 2.11]{JaTeKo}.

$(iii):\ $Follows from \cite[Example 2.10\ (i)]{JaTeKo}.

$(iv):\ $Follows from the fact that every simple module is a multiplication
module and by $(i)$.
\end{proof}

\begin{example}\label{ex1}
Consider $%
\mathbb{Z}
$-module $E=%
\mathbb{Z}
_{2}\oplus%
\mathbb{Z}
_{4}$. Then note that $E$ is not a multiplication module, and also not a
reduced module since $2^{2}(\overline{0},\overline{1})=(\overline{0}%
,\overline{0})$ and $2(\overline{0},\overline{1})\neq(\overline{0},\overline{0})$. As $E$ is finitely generated, by \cite[Proposition 2.7 and
Proposition 2.8]{JaTeKo}, $E$ is neither a Baer module nor a von Neumann
regular module. On the other hand, $E$ is not a torsion-free module and not a
simple module. Let $e=(\overline{x},\overline{y})\in E$. Then it is clear
that $ann(e)=4%
\mathbb{Z}
$, $2%
\mathbb{Z}
$ or $%
\mathbb{Z}
$. Now we have 3 cases. \textbf{Case 1: } If $ann(e)=4%
\mathbb{Z}
$, then put $I=%
\mathbb{Z}
$ and note that $ann(e)=ann(IE)$. \textbf{Case\ 2:} If $ann(e)=2%
\mathbb{Z}
$, then put $I=2%
\mathbb{Z}
$ and note that $ann(e)=ann(IE)$ \textbf{Case 3: }If $ann(e)=%
\mathbb{Z}
$, then put $I=0%
\mathbb{Z}
$ and note that $ann(e)=ann(IE)$. By above all cases, $E$ is an
annihilator multiplication module.
\end{example}

Let $E$ be a multiplication module and $V$ a finitely generated submodule of $E$. According to \cite[Lemma 3.5]{Cho}, there exists a finitely generated ideal $I$ of $A$ such that $V=IE$. The following result provides an analogous statement for annihilator multiplication modules.

\begin{proposition}
\label{prop2}Let $E$ be an annihilator multiplication $A$-module. Then,

(i) If $V$ is a finitely generated submodule of $E$, there exists a finitely
generated ideal $I$ of $A$ such that $ann(V)=ann(IE)$.

(ii) If $V$ is a submodule of $E$ (not necessarily a finitely generated),
then there exists an ideal $I$ of $A$ such that $ann(V)=ann(IE)$.
\end{proposition}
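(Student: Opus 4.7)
The plan is to reduce both parts to the defining condition applied cyclic submodule by cyclic submodule, using the standard fact that the annihilator of a sum is the intersection of the annihilators of the summands, and then to collapse that intersection into the annihilator of a single ideal-times-$E$.

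For part (i), I would start by writing $V = Ae_{1}+Ae_{2}+\cdots+Ae_{n}$, so that
\[
ann(V) = \bigcap_{i=1}^{n} ann(e_{i}).
\]
Since $E$ is an annihilator multiplication module, for each $i$ I can pick a finitely generated ideal $I_{i}$ of $A$ with $ann(e_{i}) = ann(I_{i}E)$. The key step is the identity
\[
\bigcap_{i=1}^{n} ann(I_{i}E) \;=\; ann\!\Bigl(\sum_{i=1}^{n} I_{i}E\Bigr) \;=\; ann\!\Bigl(\Bigl(\sum_{i=1}^{n} I_{i}\Bigr)E\Bigr),
\]
where the first equality follows from a quick double inclusion ($a$ kills each $I_{i}E$ iff it kills their sum) and the second from the elementary distributivity $(\sum I_{i})E = \sum I_{i}E$. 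Setting $I = \sum_{i=1}^{n} I_{i}$, which remains finitely generated as a finite sum of finitely generated ideals, yields $ann(V) = ann(IE)$.

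For part (ii), where I read the statement as $ann(V)=ann(IE)$ (the symbol $W$ in the excerpt appears to be a typographical slip for $E$), the argument is the same verbatim with an arbitrary generating set. Write $V = \sum_{\lambda \in \Lambda} Ae_{\lambda}$ for some (possibly infinite) index set $\Lambda$, pick finitely generated $I_{\lambda}$ with $ann(e_{\lambda})=ann(I_{\lambda}E)$, and compute
\[
ann(V) = \bigcap_{\lambda \in \Lambda} ann(I_{\lambda}E) = ann\!\Bigl(\Bigl(\sum_{\lambda \in \Lambda} I_{\lambda}\Bigr)E\Bigr).
\]
Then $I = \sum_{\lambda} I_{\lambda}$ is the desired ideal; it is typically no longer finitely generated, which is exactly why (ii) weakens the conclusion.

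There is no real obstacle here; the only point requiring care is the general set-theoretic identity $\bigcap_{\lambda} ann(I_{\lambda}E) = ann\bigl((\sum_{\lambda} I_{\lambda})E\bigr)$, but this is immediate from the fact that an element of $(\sum_{\lambda} I_{\lambda})E$ is a finite sum of elements of the $I_{\lambda}E$. The whole argument is a clean bookkeeping exercise on top of the defining property, mirroring the passage from cyclic to finitely generated submodules in ordinary multiplication modules.
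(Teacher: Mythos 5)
Your proposal is correct and follows essentially the same route as the paper: decompose $V$ into cyclic summands, apply the defining property to each generator, and use the identity $\bigcap_{i} ann(I_{i}E)=ann\bigl(\bigl(\sum_{i} I_{i}\bigr)E\bigr)$ to assemble the required ideal, with the infinite-index version handling part (ii). Your reading of $ann(IW)$ as a typographical slip for $ann(IE)$ is also consistent with the paper's own proof of (ii).
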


\begin{proof}
$(i):\ $Suppose that $V$ is finitely generated. Then we can write
$V=Av_{1}+Av_{2}+\cdots+Av_{k}$ for some $v_{1},v_{2},\ldots,v_{k}\in V$. This implies that $ann(V)=%
{\textstyle\bigcap\limits_{i=1}^{k}}
ann(v_{i})$. Since $E$ is an annihilator multiplication module, for each
$i=1,2,\ldots,k$, there exist finitely generated ideals $I_{i}$ of $A$ such
that $ann(v_{i})=ann(I_{i}E)$. This gives
\begin{align*}
ann(V)  &  =%
{\textstyle\bigcap\limits_{i=1}^{k}}
ann(v_{i})=%
{\textstyle\bigcap\limits_{i=1}^{k}}
ann(I_{i}E)\\
&  =ann(\left(
{\textstyle\sum\limits_{i=1}^{k}}
I_{i}\right)  E).
\end{align*}
Since all $I_{i}$'s are finitely generated, so is $%
{\textstyle\sum\limits_{i=1}^{k}}
I_{i}$, which completes the proof.

$(ii):\ $A similar argument in the proof of $(i)$ shows that
$ann(V)=ann(IE)$ for some ideal $I$ of $A$.
\end{proof}

Let $E_{i}$ be an $A_{i}$-module for each $i=1,2,\ldots,n$, where $n\geq 1$. Assume that $E=E_{1}\oplus E_{2}\oplus\cdots\oplus E_{n}$ and $A=A_{1}\oplus A_{2}\oplus\cdots\oplus A_{n}$. Then $E$ is an $A$-module with component-wise addition and scalar multiplication.

\begin{proposition}\label{direct}
Let $E_{i}$ be an $A_{i}$-module for each $i=1,2,\ldots,n$, where $n\geq
1$. Suppose that $E=E_{1}\oplus E_{2}\oplus\cdots\oplus E_{n}$ and
$A=A_{1}\oplus A_{2}\oplus\cdots\oplus A_{n}$. Then $E$ is an annihilator
multiplication $A$-module if and only if $E_{i}$ is an annihilator
multiplication $A_{i}$-module for each $i=1,2,\ldots,n$.
\end{proposition}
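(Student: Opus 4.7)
The plan is to exploit the coordinate-wise decomposition of ideals, submodules, and annihilators in the ring $A=A_{1}\oplus\cdots\oplus A_{n}$. By an obvious induction it suffices to treat the case $n=2$, so I would first write every ideal of $A$ as $I=I_{1}\oplus I_{2}$ (with $I_{i}$ an ideal of $A_{i}$), note that such an $I$ is finitely generated precisely when each $I_{i}$ is, and observe the two identities
\begin{equation*}
IE = I_{1}E_{1}\oplus I_{2}E_{2},\qquad ann_{A}(V_{1}\oplus V_{2})=ann_{A_{1}}(V_{1})\oplus ann_{A_{2}}(V_{2})
\end{equation*}
for submodules $V_{i}\subseteq E_{i}$. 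These two facts do all the heavy lifting.

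For the forward direction, I would fix an index $i$ and any $e_{i}\in E_{i}$, form $e=(0,\ldots,e_{i},\ldots,0)\in E$, and apply the hypothesis to produce a finitely generated $I=I_{1}\oplus\cdots\oplus I_{n}$ with $ann_{A}(e)=ann_{A}(IE)$. Since $ann_{A}(e)$ is $A_{1}\oplus\cdots\oplus ann_{A_{i}}(e_{i})\oplus\cdots\oplus A_{n}$ and $ann_{A}(IE)=\bigoplus_{j}ann_{A_{j}}(I_{j}E_{j})$, comparing the $i$-th coordinate gives $ann_{A_{i}}(e_{i})=ann_{A_{i}}(I_{i}E_{i})$ with $I_{i}$ finitely generated, so $E_{i}$ is an annihilator multiplication $A_{i}$-module.

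For the converse, given $e=(e_{1},\ldots,e_{n})\in E$ I would choose, for each $i$, a finitely generated ideal $I_{i}$ of $A_{i}$ with $ann_{A_{i}}(e_{i})=ann_{A_{i}}(I_{i}E_{i})$, and then set $I=I_{1}\oplus\cdots\oplus I_{n}$. Then $I$ is finitely generated, and using the two displayed identities,
\begin{equation*}
ann_{A}(IE)=\bigoplus_{i=1}^{n}ann_{A_{i}}(I_{i}E_{i})=\bigoplus_{i=1}^{n}ann_{A_{i}}(e_{i})=ann_{A}(e),
\end{equation*}
as required.

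There is no real obstacle here; the argument is essentially bookkeeping with coordinates. The only point that needs a moment of care is confirming that $I=I_{1}\oplus\cdots\oplus I_{n}$ is finitely generated as an ideal of $A$ exactly when each $I_{i}$ is finitely generated as an ideal of $A_{i}$ (which follows by taking generators coordinate-wise and padding with zeros), since this is what lets us match the finite-generation hypothesis on both sides of the equivalence.
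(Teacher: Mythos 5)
Your proposal is correct and follows essentially the same route as the paper: embed each $e_{i}$ as a coordinate vector for the forward direction, and assemble $I=I_{1}\oplus\cdots\oplus I_{n}$ from coordinate-wise finitely generated ideals for the converse, using the coordinate decompositions of annihilators and of $IE$. The reduction to $n=2$ by induction is an inessential stylistic difference; the paper simply works with general $n$ directly.
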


\begin{proof}
$\left(  \Rightarrow\right)  :\ $Let $E$ be an annihilator multiplication
$A$-module. Now, we will show that $E_{1}$ is an annihilator multiplication
$A_{1}$-module. Choose $e_{1}\in E_{1}$ and put $e=(e_{1},0,0,\ldots,0)\in
E$. Since $E$ is an annihilator multiplication module, there exists a
finitely generated ideal $I=I_{1}\oplus I_{2}\oplus\cdots\oplus I_{n}$ of
$A$ such that
\begin{align*}
ann(e)  &  =ann(e_{1})\oplus A_{2}\oplus\cdots\oplus A_{n}=ann(IE)\\
&  =ann(I_{1}E_{1})\oplus ann(I_{2}E_{2})\oplus\cdots\oplus ann(I_{n}E_{n}).
\end{align*}
Then note that $I_{1}$ is a finitely generated ideal of $A_{1}$ and
$ann(e_{1})=ann(I_{1}E_{1})$. Thus, $E_{1}$ is an annihilator multiplication
$A_{1}$-module. Likewise, $E_{i}$ is an annihilator multiplication $A_{i}%
$-module for each $i=2,3,\ldots,n$. 

$\left(  \Leftarrow\right)  :\ $Suppose that $E_{i}$ is an annihilator multiplication $A_{i}$-module for each $i=1,2,\ldots,n$. Let $e=(e_{1}%
,e_{2},\ldots,e_{n})\in E$. Since $E_{i}$ is an annihilator multiplication $A_{i}$-module, there exists a finitely generated ideal $I_{i}$ of $A_{i}$
such that $ann(e_{i})=ann(I_{i}E_{i})$. Then we conclude that
\begin{align*}
ann(e)  &  =ann(e_{1})\oplus ann(e_{2})\oplus\cdots\oplus ann(e_{n})\\
&  =ann(I_{1}E_{1})\oplus ann(I_{2}E_{2})\oplus\cdots\oplus ann(I_{n}E_{n})\\
&  =ann(IE),
\end{align*}
where $I=I_{1}\oplus I_{2}\oplus\cdots\oplus I_{n}$ is an ideal of
$A$. Since $I_{i}$ is a finitely generated ideal of $A_{i}$ for each
$i=1,2,\ldots,n$, it follows that $I$ is a finitely generated ideal of $A$. Hence, $E$ is an annihilator multiplication $A$-module.
\end{proof}

Let $E$ be an $A$-module and $T\subseteq A$ a multiplicatively closed set.
Then we denote the quotient module at $T$ by $T^{-1}E=\left\{  \frac{e}%
{t}:e\in E,t\in T\right\}$ over the quotient ring $T^{-1}A$ \cite{Sharp}.

\begin{proposition}\label{quotient}
Let $E$ be a finitely generated $A$-module and $T\subseteq A$ a multiplicatively closed set. If $E$ is an annihilator multiplication $A$-module, then $T^{-1}E$ is an annihilator multiplication $T^{-1}A$-module.
\end{proposition}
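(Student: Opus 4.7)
The plan is to exploit the standard commutation of localization with annihilators for finitely generated modules: if $M$ is a finitely generated $A$-module and $T$ is multiplicatively closed in $A$, then $T^{-1}ann_{A}(M)=ann_{T^{-1}A}(T^{-1}M)$. Given an arbitrary element $e/t\in T^{-1}E$, the goal is to produce a finitely generated ideal $J$ of $T^{-1}A$ with $ann_{T^{-1}A}(e/t)=ann_{T^{-1}A}(J\cdot T^{-1}E)$.

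First I would reduce to elements of the form $e/1$: since $t$ is invertible in $T^{-1}A$, we have $ann_{T^{-1}A}(e/t)=ann_{T^{-1}A}(e/1)$. Because $Ae$ is cyclic and hence finitely generated, applying the commutation lemma to $M=Ae$ yields
\[
ann_{T^{-1}A}(e/1)=ann_{T^{-1}A}(T^{-1}(Ae))=T^{-1}ann_{A}(e).
\]

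Next, I would invoke the hypothesis that $E$ is an annihilator multiplication module to obtain a finitely generated ideal $I$ of $A$ with $ann_{A}(e)=ann_{A}(IE)$. Since both $I$ and $E$ are finitely generated, so is the submodule $IE$, and a second application of the commutation lemma gives
\[
T^{-1}ann_{A}(IE)=ann_{T^{-1}A}(T^{-1}(IE))=ann_{T^{-1}A}((T^{-1}I)(T^{-1}E)).
\]
Setting $J=T^{-1}I$, which is a finitely generated ideal of $T^{-1}A$ because $I$ is finitely generated in $A$, the chain of equalities yields $ann_{T^{-1}A}(e/t)=ann_{T^{-1}A}(J\cdot T^{-1}E)$, as required.

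The only subtlety is the finite generation hypothesis on $E$, which is used in two essential places: guaranteeing that $Ae$ is finitely generated (automatic, since it is cyclic) and, crucially, ensuring that $IE$ is finitely generated so that the annihilator-localization identity applies. Without the assumption that $E$ is finitely generated, the equality $T^{-1}ann_{A}(IE)=ann_{T^{-1}A}((T^{-1}I)(T^{-1}E))$ could fail, so this is exactly the obstacle that the finiteness assumption removes; beyond it, the argument is a straightforward bookkeeping of the commutation lemma.
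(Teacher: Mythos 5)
Your argument is correct and follows essentially the same route as the paper: both apply the identity $T^{-1}\mathrm{ann}_{A}(M)=\mathrm{ann}_{T^{-1}A}(T^{-1}M)$ for finitely generated $M$, first to the cyclic module $Ae$ and then to $IE$, with the finite generation of $E$ needed exactly where you say it is. Your version is slightly more explicit about the reduction from $e/t$ to $e/1$, but the substance is identical.
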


\begin{proof}
Let $\frac{e}{t}\in T^{-1}E$. Since $E$ is an annihilator multiplication $A$-module, there exists a finitely generated ideal $I$ of $A$ such that $ann(e)=ann(IE)$. This implies that
\[
ann_{T^{-1}A}(\frac{e}{t})=T^{-1}\left[  ann(e)\right]  =T^{-1}[ann(IE)].
\]
Since $I$ and $E$ are finitely generated, so is $IE$. This gives
\[
T^{-1}[ann(IE)]=ann_{T^{-1}A}\left[  \left(  T^{-1}I\right)  \left(
T^{-1}E\right)  \right],
\] and so we have $ann_{T^{-1}A}(\frac{e}%
{t})=ann_{T^{-1}A}\left[  \left(  T^{-1}I\right)  \left(  T^{-1}E\right)
\right]$. Since $I$ is a finitely generated ideal of $A$, so is $T^{-1}I$. Thus, $T^{-1}E$ is an annihilator multiplication $T^{-1}A$-module.
\end{proof}

In general, the converse of the previous proposition does not hold. The following example illustrates this point.

\begin{example}
\label{ex2}Let $p$ be a prime number and consider $%
\mathbb{Z}
$-module $E=%
\mathbb{Z}
_{p}\oplus%
\mathbb{Z}
.\ $First note that $E$ is a finitely generated module. Choose
$e=(\overline{1},0)$ and note that $ann(e)=p%
\mathbb{Z}
$. Since $%
\mathbb{Z}
$ is a PID, every ideal $I\ $has the form $I=n%
\mathbb{Z}
$ for some $n\in%
\mathbb{Z}
.\ $Then we have two cases. \textbf{Case 1: }If $n=0,$ then $ann(IE)=%
\mathbb{Z}
\neq p%
\mathbb{Z}
.\ $\textbf{Case 2: }If $n\neq0,\ $then $ann(IE)=(0)\neq p%
\mathbb{Z}
$. This shows that $E$ is not an annihilator multiplication module. Let
$T=reg(%
\mathbb{Z}
)=%
\mathbb{Z}
-\left\{  0\right\}  .\ $Then note that $T^{-1}E$ is a vector space over the
field $T^{-1}%
\mathbb{Z}
=%
\mathbb{Q}
.\ $Since every vector space is torsion-free, by Example \ref{prop1},
$T^{-1}E$ is an annihilator multiplication module.
\end{example}

\begin{corollary}
Let $E$ be a finitely generated annihilator multiplication module. Then for
every prime ideal $P$ of $A,\ E_{P}$\ is an annihilator multiplication
$A_{P}$-module.
\end{corollary}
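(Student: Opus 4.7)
The plan is to derive this corollary as a direct specialization of Proposition \ref{quotient}. That proposition states that for a finitely generated annihilator multiplication $A$-module $E$ and any multiplicatively closed subset $T \subseteq A$, the localization $T^{-1}E$ is an annihilator multiplication $T^{-1}A$-module.

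The key step is simply to take $T = A \setminus P$, which is a multiplicatively closed subset of $A$ because $P$ is a prime ideal. With this choice of $T$, the standard localization identifications $T^{-1}A = A_P$ and $T^{-1}E = E_P$ turn the conclusion of Proposition \ref{quotient} into the desired statement: $E_P$ is an annihilator multiplication $A_P$-module.

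Since finite generation of $E$ and the annihilator multiplication property are exactly the hypotheses demanded by Proposition \ref{quotient}, no further work is needed. There is no real obstacle here; the corollary exists only to highlight the important special case $T = A \setminus P$, which is the version typically invoked in local-global arguments about annihilator multiplication modules.
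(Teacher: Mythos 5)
Your proof is correct and matches the paper's intent exactly: the corollary is stated without proof precisely because it is the immediate specialization of Proposition \ref{quotient} to $T = A \setminus P$. Nothing further is needed.
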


\begin{proposition}
\label{phom}Let $\phi:E\rightarrow E^{\prime}$ be an $A$-homomorphism such that
$ann(E)=ann(E^{\prime})$. 

(i) If $\phi$ is one to one and $E^{\prime}$ is an annihilator multiplication module, then $E$ is an annihilator multiplication module.

(ii) If $\phi$ is surjective, $Ker(\phi)$ is a prime submodule of $E$ and $E$ is an annihilator multiplication module, then $E^{\prime}$ is an annihilator multiplication module.
\end{proposition}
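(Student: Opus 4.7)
For part (i), the plan is to transport the annihilator along $\phi$. Because $\phi$ is injective, $ae=0$ if and only if $\phi(ae)=a\phi(e)=0$, so $ann(e)=ann(\phi(e))$. Applying the annihilator-multiplication hypothesis to $\phi(e)\in E'$ yields a finitely generated ideal $I$ of $A$ with $ann(\phi(e))=ann(IE')$. The key observation is the elementary identity
\[
ann(JE)=\{a\in A : aJ\subseteq ann(E)\},
\]
valid for any ideal $J$ of $A$; that is, $ann(JE)$ depends only on $J$ and on $ann(E)$. Combined with the hypothesis $ann(E)=ann(E')$, this gives $ann(IE)=ann(IE')$, and chaining the three equalities yields $ann(e)=ann(IE)$ with $I$ finitely generated.

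For part (ii), let $e'\in E'$ and, using surjectivity, pick a preimage $e\in E$. If $e\in Ker(\phi)$, then $e'=0$ and the choice $I=(0)$ works. Otherwise invoke the hypothesis on $E$ to produce a finitely generated ideal $I$ with $ann(e)=ann(IE)$, and show that the same $I$ witnesses the annihilator-multiplication condition for $e'$. For the inclusion $ann(IE')\subseteq ann(e')$, use the identity above together with $ann(E)=ann(E')$ to pass from $a(IE')=0$ to $a(IE)=0$, hence $ae=0$, hence $ae'=\phi(ae)=0$. For the reverse inclusion, exploit the primality of $Ker(\phi)$: from $ae'=0$ one gets $ae\in Ker(\phi)$, and since $e\notin Ker(\phi)$, primality forces $a\in (Ker(\phi):_{A}E)$, so $aE\subseteq Ker(\phi)$; surjectivity of $\phi$ then yields $aE'=\phi(aE)=0$, whence $a\in ann(E')\subseteq ann(IE')$.

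I expect the principal obstacle to lie in the second inclusion in (ii): this is exactly where the primality hypothesis does real work, converting the pointwise relation $ae\in Ker(\phi)$ into the global statement $aE\subseteq Ker(\phi)$ that is needed to reach $ann(E')$. Without primality, one controls only the behaviour of $a$ at the single preimage $e$, which is insufficient to annihilate $IE'$ uniformly. The remaining manipulations are bookkeeping once one records that $ann(JE)$ is determined by $J$ together with $ann(E)$, which is the bridge that the hypothesis $ann(E)=ann(E')$ is designed to exploit in both parts.
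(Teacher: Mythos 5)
Your proof is correct and follows essentially the same route as the paper: in (i) you transport $ann(e)=ann(\phi(e))$ via injectivity and use $ann(E)=ann(E')$ to identify $ann(IE)$ with $ann(IE')$; in (ii) you use the chain $ann(IE')=ann(IE)=ann(e)\subseteq ann(e')$ for one inclusion and the primality of $Ker(\phi)$ to upgrade $ae\in Ker(\phi)$ to $aE\subseteq Ker(\phi)$ for the other. Your explicit observation that $ann(JE)=\{a\in A: aJ\subseteq ann(E)\}$ depends only on $J$ and $ann(E)$ is exactly the justification the paper leaves as ``one can easily see.''
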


\begin{proof}
$(i):\ $Let $e\in E$. Since $E^{\prime}$ is an annihilator multiplication module, there exists a finitely generated ideal $I$ of $A$ such that $ann(\phi(e))=ann(IE^{\prime})$. Since $\phi$ is one to one and $ann(E^{\prime
})=ann(E)$, one can easily see that $ann(e)=ann(\phi(e))$ and
$ann(IE)=ann(IE^{\prime})$ which implies that $ann(e)=ann(IE)$. Thus, $E$ is an annihilator multiplication module.

$(ii):\ $Let $e^{\prime}\in E^{\prime}.\ $If $e^{\prime}=0$, then we have
$ann(e^{\prime})=A=ann((0)E')$, we are done. So assume that $e^{\prime}$ is
nonzero. Since $\phi$ is surjective, we can write $e^{\prime}=\phi(e)\ $for some
$e\in E.\ $As $E\ $is an annihilator multiplication module, there exists a
finitely generated ideal $I\ $of $A\ $such that $ann(e)=ann(IE).\ $Since
$ann(E)=ann(E^{\prime}),\ $we conclude that $ann(IE^{\prime}%
)=ann(IE)=ann(e)\subseteq ann(\phi(e)).\ $Now, let $a\in ann(\phi(e)).\ $Then we
have $a\phi(e)=\phi(ae)=0$ which implies that $ae\in Ker(\phi).\ $Since $Ker(\phi)\ $is a
prime submodule, we have either $a\in(Ker(\phi):E)$ or $e\in Ker(\phi).\ $%
\textbf{Case 1: }Let $a\in(Ker(\phi):E).\ $Then we have $aE\subseteq
Ker(\phi),\ $that is, $\phi(aE)=a\phi(E)=aE^{\prime}=0$ which implies that $a\in
ann(E^{\prime})\subseteq ann(IE^{\prime}).\ $\textbf{Case 2: }Let $e\in
Ker(\phi),\ $that is, $\phi(e)=e^{\prime}=0$ which is a contradiction. Thus, we
conclude that $ann(\phi(e))\subseteq ann(IE^{\prime}),\ $that is, $ann(e^{\prime
})=ann(IE^{\prime})$ which completes the proof.
\end{proof}

In the previous proposition $(ii)$, the condition "$Ker(\phi)\ $is a prime
submodule of $E$" is necessary. See the following example.

\begin{example}
Consider the $%
\mathbb{Z}
$-homomorphism $\pi:%
\mathbb{Z}
\oplus%
\mathbb{Z}
\rightarrow%
\mathbb{Z}
_{p}\oplus%
\mathbb{Z}
$ defined by $\pi(m,n)=(\overline{m},n),\ $where $p\ $is a
prime number.\ It is clear that $ann(%
\mathbb{Z}
\oplus%
\mathbb{Z}
)=(0)=ann(%
\mathbb{Z}
_{p}\oplus%
\mathbb{Z}
)$. Then note that $\pi$ is surjective and $Ker(\pi)=p%
\mathbb{Z}
\oplus(0)$ is not a prime submodule of $%
\mathbb{Z}
\oplus%
\mathbb{Z}
$. On the other hand, $%
\mathbb{Z}
\oplus%
\mathbb{Z}
$ is a torsion-free module, by Example \ref{prop1}, so is an annihilator
multiplication module. However, by Example \ref{ex1}, $%
\mathbb{Z}
_{p}\oplus%
\mathbb{Z}
$ is not an annihilator multiplication module.
\end{example}

According to \cite{AnFul}, a nonzero submodule $V$ of $E$ is defined as an \textit{essential submodule}, also referred to as a \textit{large submodule}, if for any submodule $K$ of $E$, the condition $K \cap V = (0)$ implies $K = (0)$. Furthermore, a submodule $V$ of $E$ is called a \textit{pure submodule} if $IE \cap V = IV$ for every ideal $I$ of $A\ $\cite{AnFul}.

\begin{corollary}
\label{csub}Let $E\ $be an $A$-module and $V\ $a submodule of $E.\ $The
following statements are satisfied.

(i)\ If $V\ $is a prime submodule of $E\ $such that $ann(V)=(V:E)$\ and
$E\ $is an annihilator multiplication module, then $E/V\ $is an annihilator
multiplication module.

(ii)\ If $E\ $is an annihilator multiplication module and $ann(E)=ann(V),\ $%
then $V\ $is an annihilator multiplication module.

(iii)\ If $E\ $is an annihilator multiplication module\ and $V\ $is a pure
submodule and essential in $E,\ $then $V\ $is an annihilator multiplication module.
\end{corollary}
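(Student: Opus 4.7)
The plan is to dispatch each part by invoking Proposition \ref{phom} together with a short extra computation where needed.

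For (i), the natural approach is to apply Proposition \ref{phom}(ii) to the canonical projection $\pi\colon E\to E/V$: surjectivity is automatic, $Ker(\pi)=V$ is prime by hypothesis, and $E$ is annihilator multiplication. The missing hypothesis to verify is $ann(E)=ann(E/V)$; since $ann(E/V)=(V:E)$ always and $(V:E)=ann(V)$ by assumption, this reduces to proving $ann(E)=ann(V)$, and only the inclusion $ann(V)\subseteq ann(E)$ is nontrivial. A direct alternative sidesteps this entirely: for $\bar e\in E/V$, either $e\in V$ and the finitely generated ideal $(0)$ satisfies $ann\bigl((0)(E/V)\bigr)=A=ann(\bar e)$, or $e\notin V$ and primeness of $V$ gives $ann(\bar e)=(V:E)=ann\bigl(A(E/V)\bigr)$, so the finitely generated ideal $A$ works.

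For (ii), apply Proposition \ref{phom}(i) to the inclusion $\iota\colon V\hookrightarrow E$, which is injective and satisfies $ann(V)=ann(E)$ by assumption. The conclusion is immediate.

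For (iii), the strategy is to reduce to (ii) by proving $ann(E)=ann(V)$. Only the inclusion $ann(V)\subseteq ann(E)$ is nontrivial: given $a\in ann(V)$, purity of $V$ applied to the ideal $I=Aa$ yields
\[
aE\cap V=IE\cap V=IV=aV=0,
\]
and essentiality of $V$ then forces the submodule $aE$ to vanish, that is, $a\in ann(E)$. Part (ii) then delivers the conclusion.

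The main obstacle sits in part (i): the verification of $ann(V)\subseteq ann(E)$ needed to run Proposition \ref{phom}(ii) is not supplied by the stated hypothesis $ann(V)=(V:E)$ in any obvious elementary way, which is why I would present the direct alternative outlined above rather than the homomorphism-based reduction.
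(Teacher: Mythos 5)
Your proposal is correct, and parts (ii) and (iii) coincide with the paper's argument: the paper likewise feeds the inclusion $\iota\colon V\hookrightarrow E$ into Proposition \ref{phom}(i) for (ii), and for (iii) proves $ann(V)=ann(E)$ exactly via $aV=aE\cap V=(0)$ and essentiality before reducing to (ii). The genuine divergence is in (i). The paper runs the homomorphism-based reduction you considered and rejected: it applies Proposition \ref{phom}(ii) to $\pi\colon E\to E/V$ after asserting $ann(E)=ann(E/V)=(V:E)$. Your suspicion about that step is well founded --- in fact the asserted equality can fail under the stated hypotheses: for $A=\mathbb{Z}$, $E=\mathbb{Z}_{4}$ and $V=2\mathbb{Z}_{4}$, the submodule $V$ is prime with $ann(V)=(V:E)=2\mathbb{Z}$, yet $ann(E)=4\mathbb{Z}\neq ann(E/V)$, so Proposition \ref{phom}(ii) is not applicable as the paper invokes it. Your direct argument --- $ann(\bar{e})=A=ann\bigl((0)(E/V)\bigr)$ when $e\in V$, and $ann(\bar{e})=(V:E)=ann\bigl(A(E/V)\bigr)$ when $e\notin V$ by primeness --- is sound, repairs this, and in fact proves the stronger statement that $E/V$ is an annihilator multiplication module whenever $V$ is a prime submodule, with no use of the hypotheses $ann(V)=(V:E)$ or that $E$ itself is an annihilator multiplication module. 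So your route is not merely an alternative but the one that actually closes the proof of (i).
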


\begin{proof}
$(i):\ $Consider the natural epimorphism $\pi:E\rightarrow E/V$ defined by
$\pi(e)=e+V$ for each $e\in E.\ $Also note that $V=Ker(\pi)\ $is a prime
submodule and $ann(E)=ann(E/V)=(V:E).\ $Then by Proposition \ref{phom}
$(ii)$,\ $E/V$ is an annihilator multiplication module.

$(ii):\ $Consider the injection $i:V\rightarrow E\ $defined by $i(v)=v\ $for
each $v\in V.\ $Then note that $i\ $is one to one and $ann(E)=ann(V).\ $Then
by Proposition \ref{phom} $(i)$,\ $V\ $is an annihilator multiplication module.

$(iii):\ $It is enough to show that $ann(V)=ann(E).\ $Let $a\in ann(V).\ $Then
we have $aV=(0).\ $Since $V\ $is a pure submodule, we conclude that $aV=aE\cap
V=(0).\ $As$\ V\ $is an essential submodule, we conclude that $aE=(0)$ which
implies that $ann(V)\subseteq ann(E).\ $Since the reverse inclusion always
holds, we have the equality $ann(V)=ann(E).$
\end{proof}

\begin{proposition}\label{pdirectsum}
Let $E$ be an $A$-module and $\{E_{i}\}_{i\in\Delta}$ be a family of submodules of $E$ such that $ann(E_{i})=ann(E_{j})$ for every $i\neq j$. Then
$E=%
{\textstyle\bigoplus\limits_{i\in\Delta}}
E_{i}$ is an annihilator multiplication module if and only if $E_{i}$ is an
annihilator multiplication module for each $i\in\Delta$.
\end{proposition}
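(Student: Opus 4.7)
The plan is to mimic the proof of Proposition \ref{direct} but work over a single ring $A$, exploiting the common annihilator condition $\operatorname{ann}(E_i)=\operatorname{ann}(E_j)=:J$ as a substitute for the componentwise decomposition of the ring. The key computational identity is that for any ideal $I$ of $A$ and any submodule $N$,
\[
\operatorname{ann}(IN)=(\operatorname{ann}(N):I),
\]
together with the elementary fact that $\bigcap_{j}(J:I_j)=(J:\sum_j I_j)$. These let me reduce everything to manipulating residuals with the single ideal $J$.

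For the forward direction, I fix $i_0\in\Delta$, pick $e\in E_{i_0}$, and regard $e$ as an element of $E$ via the canonical inclusion, so that $\operatorname{ann}_{A}(e)$ computed in $E_{i_0}$ and in $E$ coincide. By hypothesis there is a finitely generated ideal $I\trianglelefteq A$ with $\operatorname{ann}(e)=\operatorname{ann}(IE)$. Since $IE=\bigoplus_{i}IE_i$, I get $\operatorname{ann}(IE)=\bigcap_i\operatorname{ann}(IE_i)=\bigcap_i(\operatorname{ann}(E_i):I)=(J:I)$, and the same formula gives $\operatorname{ann}(IE_{i_0})=(J:I)$. Hence $\operatorname{ann}(e)=\operatorname{ann}(IE_{i_0})$ with $I$ finitely generated, proving $E_{i_0}$ is an annihilator multiplication module.

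For the reverse direction, I take $e=(e_i)_{i\in\Delta}\in E$, noting the support is a finite set $\{i_1,\dots,i_k\}$ since $E$ is a direct sum. Then $\operatorname{ann}(e)=\bigcap_{j=1}^{k}\operatorname{ann}(e_{i_j})$, and for each $j$ the hypothesis furnishes a finitely generated ideal $I_j$ of $A$ with $\operatorname{ann}(e_{i_j})=\operatorname{ann}(I_j E_{i_j})=(J:I_j)$. I set $I:=\sum_{j=1}^{k}I_j$, which is finitely generated, and compute
\[
\operatorname{ann}(e)=\bigcap_{j=1}^{k}(J:I_j)=(J:I)=(\operatorname{ann}(E):I)=\operatorname{ann}(IE),
\]
where the penultimate equality uses $\operatorname{ann}(E)=\bigcap_i\operatorname{ann}(E_i)=J$.

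The only delicate point is ensuring that the common-annihilator hypothesis is actually used precisely where needed: in the forward direction it is what collapses the intersection $\bigcap_i\operatorname{ann}(IE_i)$ to a single term $\operatorname{ann}(IE_{i_0})$, and in the reverse direction it is what identifies $\operatorname{ann}(E)$ with the common value $J$ so that the finitely generated ideal $I=\sum I_j$ works uniformly across all components. I do not anticipate any real obstacle beyond keeping the residual and intersection manipulations straight; both directions are essentially bookkeeping around the identity $\operatorname{ann}(IN)=(\operatorname{ann}(N):I)$.
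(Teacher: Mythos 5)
Your proof is correct and follows essentially the same route as the paper's: decompose $ann(e)$ as a finite intersection over the support of $e$ and use the common annihilator $J$ to glue the finitely generated ideals $I_j$ into $I=\sum_j I_j$, with your residual identity $ann(IN)=(ann(N):I)$ doing exactly the work of the paper's step $ann(I_jE_{i_j})=ann(I_jE)$. The only cosmetic difference is in the direction from $E$ to the $E_i$: the paper cites its Corollary \ref{csub}(ii) after noting $ann(E)=ann(E_{i_0})$, whereas you recompute that direction directly via $(J:I)$.
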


\begin{proof}
The if part follows from Corollary \ref{csub} (ii). For the only if part, let $E_{i}$ be an annihilator multiplication module for each $i\in\Delta$. Choose $e\in E$. Then we can write $e=e_{i_{1}}+e_{i_{2}}+\cdots+e_{i_{n}}$ for some
$i_{1},i_{2},\ldots,i_{n}\in\Delta$. Let $a\in ann(e)$. Then we conclude
that $a\left(  e_{i_{1}}+e_{i_{2}}+\cdots+e_{i_{n}}\right)  =0$ which implies
that $ae_{i_{1}}=-ae_{i_{2}}-ae_{i_{3}}-\cdots-ae_{i_{n}}\in E_{i_{1}}\cap%
{\textstyle\sum\limits_{j=2}^{n}}
E_{i_{j}}=(0)$. Thus we conclude that $a\in ann(e_{i_{1}})$. Similarly, we
have $a\in%
{\textstyle\bigcap\limits_{j=1}^{n}}
ann(e_{i_{j}})$. Then clearly we obtain $ann(e)=%
{\textstyle\bigcap\limits_{j=1}^{n}}
ann(e_{i_{j}})$. Since $E_{i_{j}}$ is an annihilator multiplication module for each $1\leq j\leq n$, there exist finitely generated ideals $I_{j}$ such that $ann(e_{i_{j}})=ann(I_{j}E_{j})$. On the other hand, we have $ann(I_{j}E_{j})=ann(I_{j}E)$ since $ann(E_{i})=ann(E_{j})$ for every $i\neq
j$ and $E$ is internal direct sum of the family $\{E_{i}\}_{i\in\Delta}$. This implies that
\begin{align*}
ann(e)  &  =%
{\textstyle\bigcap\limits_{j=1}^{n}}
ann(e_{i_{j}})=%
{\textstyle\bigcap\limits_{j=1}^{n}}
ann(I_{j}E_{j})\\
&  =%
{\textstyle\bigcap\limits_{j=1}^{n}}
ann(I_{j}E)=ann\left(  \left(
{\textstyle\sum\limits_{j=1}^{n}}
I_{j}\right)  E\right)  .
\end{align*}
As $%
{\textstyle\sum\limits_{j=1}^{n}}
I_{j}$ is finitely generated, $E$ is an annihilator multiplication module.
\end{proof}

The condition that $ann(E_{i})=ann(E_{j})$ for every $i\neq j$ is a necessary requirement in the preceding proposition. The subsequent example demonstrates the necessity of this condition.

\begin{example}
Let $p$ be a prime number. Since $%
\mathbb{Q}
$ is a torsion-free $%
\mathbb{Z}
$-module and $%
\mathbb{Z}
_{p}$ is a multiplication $%
\mathbb{Z}
$-module, so they are annihilator multiplication $%
\mathbb{Z}
$-modules. Also, it is clear that $ann\left(
\mathbb{Q}
)=(0\right)  \neq ann(%
\mathbb{Z}
_{p})=p%
\mathbb{Z}
.\ $However, $%
\mathbb{Z}
$-module $E=%
\mathbb{Q}
\oplus%
\mathbb{Z}
_{p}$ is not an annihilator multiplication module since $ann(\overline{1})=p%
\mathbb{Z}
\neq ann(IE)=ann(I)=(0)$ for any nonzero ideal $I$ of $\mathbb{Z}$.
\end{example}

Let $E$ be an $A$-module and $X$ an indeterminate over $A$. The notation $E[X]$ refers to the polynomial module over the polynomial ring $A[X]$. An $A$-module $E$ is defined as an \textit{Armendariz module} if, for every $e(X)=e_{0}+e_{1}X+e_{2}X^{2}+\cdots+e_{n}X^{n}$ in $E[X]$ and $f(X)=a_{0}+a_{1}X+a_{2}X^{2}+\cdots+a_{k}X^{k}$ in $A[X]$ such that $f(X)e(X)=0$, then $a_{i}e_{j}=0$ for every $1\leq i\leq k$ and $1\leq j\leq n$ \cite{Baser}. Prior to examining the stability of the annihilator multiplication property in polynomial modules, the following lemma is required.

\begin{lemma}
\label{lempol}Let $E$ be an Armendariz module. Then,

(i) $ann_{A[X]}(e(X))=\left[
{\textstyle\bigcap\limits_{i=0}^{n}}
ann_{A}(e_{i})\right]  \left[  X\right]  $ for every $e(X)=e_{0}+e_{1}%
X+\cdots+e_{n}X^{n}\in E[X].$

(ii)\ $ann_{A[X]}(p(X)E[X])=\left[
{\textstyle\bigcap\limits_{i=0}^{k}}
ann_{A}(a_{i}E)\right]  \left[  X\right]  \ $for every $p(X)=a_{0}%
+a_{1}X+\cdots+a_{k}X^{k}\in A[X].$
\end{lemma}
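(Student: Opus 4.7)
The plan is to prove both equalities by double inclusion: in each case the containment $\supseteq$ is a routine coefficient-wise computation, while the reverse inclusion is where the Armendariz hypothesis does the real work.

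For part (i), the containment $\supseteq$ is direct: if every coefficient $b_j$ of some $f(X) = \sum_j b_j X^j$ lies in $\bigcap_{i=0}^{n} ann_{A}(e_{i})$, then each product $b_j e_i$ vanishes, so expanding $f(X) e(X)$ coefficient by coefficient shows it equals $0$ and hence $f(X) \in ann_{A[X]}(e(X))$. For the reverse direction, I would take $f(X) = \sum_j b_j X^j \in ann_{A[X]}(e(X))$ and invoke the Armendariz hypothesis applied to the pair $(f(X), e(X))$: it immediately yields $b_j e_i = 0$ for every index pair $(i,j)$. Thus each coefficient $b_j$ belongs to every $ann_A(e_i)$, hence to their intersection, so $f(X) \in \bigl[\bigcap_{i=0}^{n} ann_A(e_i)\bigr][X]$.

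For part (ii), the containment $\supseteq$ is again obtained by direct expansion: if each $b_j$ annihilates every $a_i E$, then $b_j a_i e_{\ell} = 0$ for all $i, j, \ell$, and coefficient-wise computation shows $f(X) \cdot p(X) e(X) = 0$ for any $e(X) \in E[X]$. The key move for the $\subseteq$ direction is to specialize cleverly. Given $f(X) \in ann_{A[X]}(p(X) E[X])$, for each fixed $e \in E$ I would consider the element $p(X) e \in E[X]$, whose coefficients are precisely $a_0 e, a_1 e, \ldots, a_k e$. Since $p(X) e \in p(X) E[X]$, one has $f(X) \cdot (p(X) e) = 0$, and now applying Armendariz to the pair $(f(X), p(X) e)$ with $f(X) \in A[X]$ and $p(X) e \in E[X]$ gives $b_j (a_i e) = 0$ for every $i, j$. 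As $e \in E$ was arbitrary, this says $b_j a_i E = 0$, i.e.\ $b_j \in ann_A(a_i E)$ for every $i$, placing $b_j$ in the required intersection and $f(X)$ in the corresponding polynomial ideal.

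The step I expect to be the main obstacle is choosing the right element of $p(X) E[X]$ against which to test $f(X)$ in part (ii). Testing against a general $p(X) e(X)$ and applying Armendariz produces only the convolutional relation $b_j \sum_{i+\ell = m} a_i e_{\ell} = 0$, from which one cannot isolate $b_j a_i e_\ell = 0$ without further work. Reducing to a constant $e(X) = e \in E$ collapses $p(X) e(X)$ to a polynomial with coefficients $a_i e$, after which a single clean application of the Armendariz property separates $b_j$ from $a_i e$ and delivers the desired conclusion. Once this specialization is in hand, both (i) and (ii) follow from essentially the same two-step pattern.
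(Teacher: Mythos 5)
Your proposal is correct and follows essentially the same route as the paper: part (i) is exactly the Armendariz definition plus the routine reverse inclusion, and for part (ii) the paper makes precisely the specialization you identify as the key move, testing $f(X)$ against $p(X)e$ for a constant $e\in E$ so that the Armendariz property separates $b_j$ from $a_i e$. No gaps.
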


\begin{proof}
$\left(  i\right)  :\ $Follows from definition of Armendariz module.

$\left(  ii\right)  :\ $Suppose that $p(X)=a_{0}+a_{1}X+\cdots+a_{k}X^{k}\in
A[X].\ $Let $q(X)=b_{0}+b_{1}X+\cdots+b_{t}X^{t}\in A[X]$ such that $q(X)\in
ann_{A[X]}(p(X)E[X]).\ $Then we have $q(X)p(X)E[X]=0\ $which implies that
$q(X)\left(  p(X)e\right)  =0\ $for each $e\in E.\ $Since $p(X)e=a_{0}%
e+a_{1}eX+\cdots+a_{k}eX^{k}$ and $E\ $is an Armendariz module, we have
$b_{j}a_{i}e=0\ $for each $0\leq j\leq t$ and $0\leq i\leq k.\ $Then we
conclude that $b_{j}\in%
{\textstyle\bigcap\limits_{i=0}^{k}}
ann_{A}(a_{i}E)$ for every $0\leq j\leq t,\ $that is, $ann_{A[X]}%
(p(X)E[X])\subseteq\left[
{\textstyle\bigcap\limits_{i=0}^{k}}
ann_{A}(a_{i}E)\right]  \left[  X\right]  .$\ For the reverse inclusion, let
$q(X)=b_{0}+b_{1}X+\cdots+b_{t}X^{t}\in\left[
{\textstyle\bigcap\limits_{i=0}^{k}}
ann_{A}(a_{i}E)\right]  \left[  X\right]  .\ $Then we have $b_{j}\in%
{\textstyle\bigcap\limits_{i=0}^{k}}
ann_{A}(a_{i}E)$ which implies that $b_{j}a_{i}e=0$ for every $e\in E,\ 0\leq
j\leq t$ and $0\leq i\leq k.\ $Choose $h(X)=e_{0}+e_{1}X+\cdots+e_{s}X^{s}\in
E[X]$.\ Then note that $p(X)h(X)=a_{0}e_{0}+(a_{1}e_{0}+a_{0}e_{1}%
)X+\cdots+a_{k}e_{s}X^{k+s}.\ $Since $b_{j}a_{i}e=0$ for every
$e\in E,\ 0\leq j\leq t$ and $0\leq i\leq k,\ $we have $q(X)p(X)h(X)=0.\ $This
implies that $q(X)p(X)E[X]=0,\ $that is, $q(X)\in ann_{A[X]}(p(X)E[X]).\ $%
Thus, we conclude that $\left[
{\textstyle\bigcap\limits_{i=0}^{k}}
ann_{A}(a_{i}E)\right]  \left[  X\right]  \subseteq ann_{A[X]}(p(X)E[X]).$
\end{proof}

\begin{proposition}\label{ppol}
Let $E\ $be an Armendariz $A$-module. Then $E\ $is an annihilator multiplication $A$-module if and only if $E[X]$ is an annihilator multiplication $A[X]$-module.
\end{proposition}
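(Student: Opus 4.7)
The plan is to exploit Lemma \ref{lempol} in both directions, translating annihilators of polynomials into intersections of coefficient annihilators, and then to use the trick that a finitely generated coefficient ideal can be realized as the image of a single polynomial multiplied by $A[X]$.

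For the forward direction, let $e(X)=e_{0}+e_{1}X+\cdots+e_{n}X^{n}\in E[X]$. I would first apply Lemma \ref{lempol}(i) to obtain
\[
ann_{A[X]}(e(X))=\left[{\textstyle\bigcap\limits_{i=0}^{n}} ann_{A}(e_{i})\right][X].
\]
Next, I would invoke Proposition \ref{prop2}(i) applied to the finitely generated submodule $V=Ae_{0}+Ae_{1}+\cdots+Ae_{n}$ of $E$: since $ann(V)=\bigcap_{i=0}^{n} ann_{A}(e_{i})$, there is a finitely generated ideal $I=(a_{1},\ldots,a_{k})$ of $A$ with $\bigcap_{i=0}^{n} ann_{A}(e_{i})=ann_{A}(IE)$. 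Finally, I would set $p(X)=a_{1}+a_{2}X+\cdots+a_{k}X^{k-1}\in A[X]$ and apply Lemma \ref{lempol}(ii) to get $ann_{A[X]}(p(X)E[X])=\left[\bigcap_{i=1}^{k} ann_{A}(a_{i}E)\right][X]=[ann_{A}(IE)][X]$. Combining these,
\[
ann_{A[X]}(e(X)) = [ann_{A}(IE)][X] = ann_{A[X]}\bigl(p(X)A[X]\cdot E[X]\bigr),
\]
and $p(X)A[X]$ is a principal (hence finitely generated) ideal of $A[X]$, as required.

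For the reverse direction, given $e\in E$, I would view $e$ as a constant polynomial in $E[X]$. Since $E[X]$ is annihilator multiplication over $A[X]$, there exists a finitely generated ideal $J=(f_{1}(X),\ldots,f_{m}(X))$ of $A[X]$ with $ann_{A[X]}(e)=ann_{A[X]}(JE[X])$. Writing $f_{l}(X)=\sum_{j}a_{l,j}X^{j}$ and letting $I$ be the (finitely generated) ideal of $A$ generated by all coefficients $a_{l,j}$, Lemma \ref{lempol}(ii) together with the identity $ann_{A[X]}(JE[X])=\bigcap_{l=1}^{m} ann_{A[X]}(f_{l}(X)E[X])$ yields
\[
ann_{A[X]}(JE[X])=\left[{\textstyle\bigcap\limits_{l,j}} ann_{A}(a_{l,j}E)\right][X]=[ann_{A}(IE)][X].
\]
Since also $ann_{A[X]}(e)=[ann_{A}(e)][X]$ (a direct computation from the definition of multiplication, not requiring Armendariz for a constant), comparing constant-term-wise gives $ann_{A}(e)=ann_{A}(IE)$, proving that $E$ is annihilator multiplication over $A$.

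The main obstacle I expect is the bookkeeping in the forward direction: ensuring that the finitely generated $A$-ideal $I$ coming from Proposition \ref{prop2}(i) can be encoded as the coefficient ideal of a single polynomial $p(X)$, so that $JE[X]$ for the principal $A[X]$-ideal $J=p(X)A[X]$ produces exactly the annihilator $[ann_{A}(IE)][X]$. Once the correspondence between finitely generated ideals of $A$ (as coefficient ideals) and principal ideals of $A[X]$ (generated by a single polynomial collecting those coefficients) is set up, Lemma \ref{lempol} handles everything else mechanically. The Armendariz hypothesis is used precisely to make Lemma \ref{lempol} valid in both parts of the argument.
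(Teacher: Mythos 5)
Your proof is correct and follows essentially the same route as the paper's: both directions reduce to Lemma \ref{lempol} combined with Proposition \ref{prop2}, applied to the coefficient submodule $Ae_{0}+\cdots+Ae_{n}$ in one direction and to the coefficient ideal of the generators of $J$ in the other. The only difference is cosmetic: in the forward direction the paper exhibits the finitely generated ideal $I[X]$ of $A[X]$, whereas you encode the generators of $I$ as the coefficients of a single polynomial $p(X)$ and use the principal ideal $p(X)A[X]$; since $p(X)A[X]\cdot E[X]=p(X)E[X]$ has annihilator $\left[ ann_{A}(IE)\right] [X]$ by Lemma \ref{lempol} (ii), both choices work (yours additionally shows the ideal of $A[X]$ can be taken principal).
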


\begin{proof}
$\left(  \Leftarrow\right)  :\ $Suppose that $E[X]$ is an annihilator
multiplication $A[X]$-module. Let $e_{0}\in E$ and put $e(X):=e_{0}$ which is
a constant polynomial. Since $E[X]$ is an annihilator multiplication
$A[X]$-module, there exist $p_{1}(X),p_{2}(X),\ldots,p_{k}(X)\in A[X]$ such
that
\begin{align*}
ann_{A[X]}(e(X))  &  =\left[  ann_{A}(e_{0})\right]  \left[  X\right]
=ann_{A[X]}\left(
{\textstyle\sum\limits_{i=1}^{k}}
p_{i}(X)E[X]\right) \\
&  =%
{\textstyle\bigcap\limits_{i=1}^{k}}
ann_{A[X]}(p_{i}(X)E[X]).
\end{align*}
Let $p_{i}(X)=a_{0,i}+a_{1,i}X+a_{2,i}X^{2}+\cdots+a_{t,i}X^{t}.\ $Then by
Lemma \ref{lempol} (ii), we have $ann_{A[X]}(p_{i}(X)E[X])=\left[
{\textstyle\bigcap\limits_{j=0}^{t}}
ann_{A}(a_{j,i}E)\right]  \left[  X\right]  .\ $This implies that
\begin{align*}
ann_{A[X]}(e(X))  &  =\left[  ann_{A}(e_{0})\right]  \left[  X\right]  =%
{\textstyle\bigcap\limits_{i=1}^{k}}
ann_{A[X]}(p_{i}(X)E[X])\\
&  =\left[
{\textstyle\bigcap\limits_{i=1}^{k}}
{\textstyle\bigcap\limits_{j=0}^{t}}
ann_{A}(a_{j,i}E)\right]  \left[  X\right]  .
\end{align*}
Thus we obtain,
\begin{align*}
ann_{A}(e_{0})  &  =%
{\textstyle\bigcap\limits_{i=1}^{k}}
{\textstyle\bigcap\limits_{j=0}^{t}}
ann_{A}(a_{j,i}E)\\
&  =ann_{A}\left(  \left(
{\displaystyle\sum\limits_{\genfrac{.}{.}{0pt}{}{1\leq i\leq k}{0\leq j\leq
t}}}
Aa_{j,i}\right)  E\right)  .
\end{align*}
Hence, $E\ $is an annihilator multiplication $A$-module since $%
{\displaystyle\sum\limits_{\genfrac{.}{.}{0pt}{}{1\leq i\leq k}{0\leq j\leq
t}}}
Aa_{j,i}$ is a finitely generated ideal.

$\left(  \Rightarrow\right)  :$\ Let $E\ $be an annihilator multiplication
$A$-module and $e(X)=e_{0}+e_{1}X+e_{2}X^{2}+\cdots+e_{n}X^{n}\in E[X].\ $Then
by Lemma \ref{lempol} (i),
\begin{align*}
ann_{A[X]}\left(  e(X)\right)   &  =\left[
{\textstyle\bigcap\limits_{i=0}^{n}}
ann_{A}(e_{i})\right]  \left[  X\right] \\
&  =\left[  ann_{A}(Ae_{0}+Ae_{1}+\cdots+Ae_{n})\right]  \left[  X\right]  .
\end{align*}
Since $Ae_{0}+Ae_{1}+\cdots+Ae_{n}$ is a finitely generated submodule of
$E\ $and $E\ $is an annihilator multiplication module, by Proposition
\ref{prop2}, there exists a finitely generated ideal $I$ of $A\ $such that
\[
ann_{A}(Ae_{0}+Ae_{1}+\cdots+Ae_{n})=ann_{A}(IE).
\]
This gives
\begin{align*}
ann_{A[X]}\left(  e(X)\right)   &  =\left[  ann_{A}(IE)\right]  \left[
X\right]  =ann_{A[X]}\left(  (IE)[X]\right) \\
&  =ann_{A[X]}(I[X]E[X]).
\end{align*}

Since $I\ $is finitely generated in $A,$ $I[X]$ is finitely generated in
$A[X].\ $Thus, it follows that $E[X]\ $is an annihilator multiplication $A[X]$-module.
\end{proof}

Let $E$ be an $A$-module. \textit{The trivial extension} $A\propto E=A\oplus E$ of $A$-module $E$ is a commutative ring with a nonzero identity $(1,0)$ under componentwise addition and following multiplication: $(a,e)(b,v)=(ab,av+be)$ for each $(a,e),(b,v)\in A\propto E$ (See, \cite{AndersonWinders} and \cite{Huckaba}). For an ideal $I$ of $A$ and a submodule $V$ of $E$, $I\propto V$ is an ideal of $A\propto E$ if and only if $IE\subseteq V$ \cite[Theorem 3.1]{AndersonWinders}. Now, we are ready to determine certain annihilator multiplication/multiplication ideals in trivial ring extension $A\propto E$.

\begin{theorem}\label{tri1}
Let $E$ be a finitely generated $A$-module and $V$ a submodule of $E$. The following statements are equivalent.
\begin{itemize}
\item [(i)] $V$ is an annihilator multiplication module.
\item [(ii)] $0\propto V$ is an annihilator multiplication ideal of $A\propto E$.
\end{itemize}
\end{theorem}

\begin{proof}
$\left(i\right)\Leftarrow\left(ii\right):$ Let $V$ be an annihilator multiplication module and choose $(0,v)\in\ 0\propto V$. Since $V$ is an annihilator multiplication module, there exists a finitely generated ideal $I$ of $A$ such that $ann(v)=ann(IV)$. This gives $ann(0,v)=ann(v)\propto E=ann(IV)\propto E=ann((I\propto E)(0\propto V))$. As $I$ and $E$ are finitely generated, it follows that $I\propto E$ is a finitely generated ideal in $A\propto E$. Thus, $0\propto V$ is an annihilator multiplication ideal in $A\propto E$.\\
$\left(ii\right)\Leftarrow\left(i\right):$ Let $0\propto V$ be an annihilator multiplication ideal of $A\propto E$. Choose $v\in V$, and by the assumption, note that $ann(0,v)=ann(\frak{J}(0\propto V))$ for some finitely generated ideal $\frak{J}$ of $A\propto E$. As $E$ is finitely generated, it follows that $\frak{J}+(0\propto E)$ is also a finitely generated ideal in $A\propto E$. Since $(\frak{J}+0\propto E)(0\propto V)=\frak{J}(0\propto V)+(0\propto E)(0\propto V)=\frak{J}(0\propto V)$, we may assume that $\frak{J}$ contains $0\propto E$. In this case, by \cite[Theorem 3.1]{AndersonWinders}, $\frak{J}=I\propto E$ and $I$ is a finitely generated ideal in $A$. Thus we have $\frak{J}(0\propto V)=0\propto IV$, and this gives $ann(0,v)=ann(v)\propto E=ann(\frak{J}(0\propto V))=ann(0\propto IV)=ann(IV)\propto E$. Therefore, we conclude that $ann(v)=ann(IE)$ which completes the proof.
\end{proof}

Recall from \cite{Sharp} that an $A$-module $E$ is said to be a \textit{simple module} if $Ae=E$ for every $0\neq e\in E$. Now, we are ready to characterize simple modules in terms of certain multiplication ideals of the trivial extension $A\propto E$.

\begin{theorem}\label{tri2}
Let $A$ be a local ring with unique maximal ideal $\frak{m}$ and $E$ be an $A$-module such that $\frak{m}E=0$. Suppose that $V$ is a submodule of $E$. Then $0\propto V$ is a multiplication ideal of $A$ if and only if $V$ is a simple module. 
\end{theorem}

\begin{proof}
$(\Rightarrow):$ Suppose that $0\propto V$ is a multiplication ideal of $A\propto E$. Choose $0\neq v\in V$. Since $0\propto V$ is a multiplication ideal and $(0,v)\in 0\propto V$. By \cite[Lemma 3.5]{Cho}, there exists $(a_1,v_1),(a_2,v_2),\ldots, (a_n,v_n)\in A\propto E$ such that $A\propto E (0,v)=0\propto Av=\sum_{i=1}^{n}(a_i,v_i) 0\propto V$. If $a_i$'s are nonunits of $A$, then $a_i\in \frak{m}$. Then by the assumption, $\sum_{i=1}^{n}(a_i,v_i) 0\propto V=0\propto 0=0\propto Av$ which implies that $Av=0$, a contradiction. Then there exists $i=1,2,\ldots,n$ such that $a_i$ is a unit of $A$. In this case, we have $\sum_{i=1}^{n}(a_i,v_i) 0\propto V=0\propto V=A\propto E(0,v)=0\propto Av$ which implies that $Av=V$, that is $V$ is a simple module.\\
$(\Leftarrow):$ Assume that $V$ is a simple module. Choose a nonzero element $(0,v)\in 0\propto V$. Since $V$ is a simple module and $v\neq 0$, we have $Av=V$. This implies that $A\propto E (0,v)=0\propto Av=0\propto V= (A\propto E)(0\propto V)$, that is $0\propto V$ is a multiplication ideal of $A\propto E$.
\end{proof}

\begin{theorem}\label{tri3}
Let $A$ be a local ring with unique maximal ideal $\frak{m}$ and $E$ be an $A$-module such that $\frak{m}E=0$. For a proper ideal $I$ of $A$, $I\propto E$ is an annihilator multiplication ideal of $A\propto E$ if and only if $I$ is an annihilator multiplication ideal of $A$.
\end{theorem}

\begin{proof}
$(\Leftarrow):$ Suppose that $I$ is an annihilator multiplication ideal of $A$ and choose $(a,v)\in I\propto E$. Then $a\in I\subseteq \frak{m}$ which implies that $ann(a,v)=ann(a)\propto E$. Since $I$ is an annihilator multiplication ideal and $a\in I$, there exists a finitely generated ideal $J=\sum_{i=1}^{k}Ax_i$ of $A$ such that $ann(a)=ann(JI)=\bigcap_{i=1}^{k}ann(x_i I)$. Now, we have two cases. \textbf{Case 1:} Assume that $J=A$. In this case, we obtain $ann(a)=ann(I)$ which implies that $ann(a,v)=ann(a)\propto E=ann(I)\propto E=ann((1,0)I\propto E)$. Thus, $I\propto E$ is an annihilator multiplication ideal of $A\propto E$. \textbf{Case 2:} Suppose that $J$ is a proper ideal of $A$. In this case, $x_i$ is a nonunit and hence an element of $\frak{m}$ for each $i=1,2,\ldots,k$. Since $\frak{m}E=0$, it is easy to see that $(x_i,0)I\propto E=x_iI\propto 0$ which implies that $ann(x_iI)\propto E=ann((x_i,0)I\propto E)$. Then we conclude that $ann(a,v)=ann(a)\propto E=(\bigcap_{i=1}^{k} ann(x_iI))\propto E=\bigcap_{i=1}^{k}(ann(x_iI)\propto E)=\bigcap_{i=1}^{k}ann((x_i,0)I\propto E)$. Now, put $\frak{J}=\sum_{i=1}^{k}(x_i,0)A\propto E$. Then $\frak{J}$ is a finitely generated ideal in $A\propto E$ and it is clear that $ann(a,v)=ann(\frak{J}(I\propto E))$ which completes the proof.\\
$(\Rightarrow):$ Choose an element $a\in I$. Since $I\propto E$ is an annihilator multiplication ideal and $\frak{m}E=0$, we have $ann(a,0_E)=ann(a)\propto E=ann(\frak{J}(I\propto E))$ for some finitely generated ideal $\frak{J}$ of $A\propto E$. Now, we have two cases. \textbf{Case 1:} Let $\frak{J}=A\propto E$. Then we have $ann(a,0_E)=ann(a)\propto E=ann(I\propto E)=ann(I)\propto E$ which implies that $ann(a)=ann(I)$. As $A$ is principal ideal and $AI=I$, it follows that $I$ is an annihilator multiplication ideal of $A$. \textbf{Case 2:} Let $\frak{J}$ be a proper ideal of $A\propto E$. Then for any $(b,v)\in \frak{J}$, $b\in \frak{m}$ because $b\in A$ is nonunit and $\frak{m}$ is the unique maximal ideal of $A$. This gives that $(b,v)(0,z)=(0,bz)=(0,0)$ for any $z\in E$, and thus we obtain $\frak{J}(0\propto E)=0\propto 0$. Then we conclude that $\frak{J}[(I\propto E)+(0\propto E)]=\frak{J}(I\propto E)$. Thus, we may assume that $\frak{J}$ contains $0\propto E$. Then by \cite[Theorem 3.1]{AndersonWinders}, we can write $\frak{J}=K\propto E$ for some finitely generated proper ideal $K$ of $A$. This implies that $\frak{J}(I\propto E) =(K\propto E)(I\propto E)=KI\propto 0$. Then we have $ann(a,0_E)=ann(a)\propto E=ann(\frak{J}(I\propto E))=ann(KI\propto 0)=ann(KI)\propto E$ which implies that $ann(a)=ann(KI)$ for some finitely generated ideal $K$ of $A$, that is $I$ is an annihilator multiplication ideal.\\
\end{proof}

The following example provides a construction of an annihilator multiplication ring that is not a multiplication ring.

\begin{example}\label{construction}
Let $E$ be a vector space over a field $k$ with $dim_{k}(E)=n\geq 2$. Consider the trivial extension $R=k\propto E$. Then $R$ is an annihilator multiplication ring which is not a multiplication ring.
\end{example}

\begin{proof}
Suppose that $R=k\propto E$, where $E$ is a vector space over the field $k$ with $dim_{k}(E)=n\geq 2$. Then by \cite[Corollary 3.4]{AndersonWinders}, we know that every proper ideal of $R$ has the form $0\propto V$ for some subspace $V$ of $E$. Since $E$ is a vector space, its all subspaces are torsion-free, so by Example \ref{prop1}, $V$ is an annihilator multiplication module. Then by Theorem \ref{tri1}, $0\propto V$ is an annihilator multiplication ideal of $R$. Thus, $R$ is an annihilator multiplication ring. On the other hand, put $V=E$. Since $E$ is not a simple module, by Theorem \ref{tri2}, $0\propto V$ is not a multiplication ideal of $R$, and hence $R$ can not be a multiplication ring.
\end{proof}

\section{Characterizations of some special rings/modules}
This section is devoted to characterizing several significant classes of rings and modules, including torsion-free, multiplication, uniserial, injective modules, classical prime/classical 1-absorbing prime submodules, and Noetherian vn-regular rings. Furthermore, we investigate the relationship between the associated primes of the module, $Ass_{A}(E)$, and the associated primes of the underlying ring, $Ass(A)$, within the framework of annihilator multiplication modules.

\begin{theorem}\label{ttorsion}
Let $E\ $be an $A$-module. Then $E\ $is a torsion-free module if and only if
$E\ $is an annihilator multiplication module and faithful module over the
domain $A.$
\end{theorem}

\begin{proof}
$\left(  \Rightarrow\right)  :\ $If $E\ $is a torsion-free module, by
Example \ref{prop1} $(iii)$, $E\ $is an annihilator multiplication module.
Also note that if $E\ $is torsion-free, then clearly $A\ $is a domain and
$E\ $is a faithful module.\\ $\left(  \Leftarrow\right)  :\ $Suppose that
$E\ $is an annihilator multiplication module and faithful module over the
domain $A.$ Let $0\neq e\in E.\ $Now, we will show that $ann(e)=(0).\ $Since
$E\ $is an annihilator multiplication module, there exists a finitely
generated ideal $I\ $of $A\ $such that $ann(e)=ann(IE).\ $As $I\ $is a
finitely generated ideal, we can write $I=%
{\textstyle\sum\limits_{i=1}^{n}}
Ra_{i}\ $for some $a_{i}\in I.\ $Since $E\ $is a faithful module, we have
$ann(IE)=ann(I)=%
{\textstyle\bigcap\limits_{i=1}^{n}}
ann(a_{i})=ann(e).\ $As $e\ $is not zero, $I\ $can not be a zero ideal. Since
$A\ $is a domain, $ann(a_{i})=(0)\ $for every $i=1,2,\ldots,n.\ $Then we have
$ann(e)=(0)$ which completes the proof.
\end{proof}

Consider an $A$-module $E$. Given any submodule $V$ of $E$ and a nonempty subset $J$ of $A$, we define the residual of $V$ by $J$ as $(V:_{E}J)=\{ e\in E : Je\subseteq V \}$. When $V$ is the zero submodule, we use $ann_{E}(J)$ to represent $((0):_{E}J)$. As described in \cite{TorFar}, $E$ is called a \textit{comultiplication module} if, for every submodule $V$ of $E$, there is an ideal $I$ of $R$ such that $V=ann_{E}(I)$. In fact, $E$ is a comultiplication module precisely when $V=ann_{E}(ann(V))$. For further exploration of comultiplication modules, see \cite{ShaSmi} and \cite{YilTeKo}.

\begin{proposition}\label{pmult}
Let $E\ $be a comultiplication module. Then $E\ $is a multiplication module if
and only if $E\ $is an annihilator multiplication module.
\end{proposition}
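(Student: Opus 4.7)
The plan is to prove the two directions separately, with the forward direction being immediate and the reverse direction exploiting the fact that the comultiplication property inverts annihilators at the level of submodules.

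For the forward direction, if $E$ is a multiplication module then Example \ref{prop1}(i) tells us that $E$ is automatically annihilator multiplication. Nothing further is needed here.

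For the reverse direction, assume $E$ is both comultiplication and annihilator multiplication, and pick an arbitrary submodule $V$ of $E$. I want to produce an ideal $I$ of $A$ with $V=IE$. The annihilator multiplication hypothesis, as packaged in Proposition \ref{prop2}(ii), hands me an ideal $I$ of $A$ such that $ann(V)=ann(IE)$. Now I invoke the comultiplication hypothesis twice. First, applied to the submodule $V$ itself, it gives $V=ann_{E}(ann(V))$. Second, applied to the submodule $IE$, it gives $IE=ann_{E}(ann(IE))$. Combining these with the equality $ann(V)=ann(IE)$ yields
\[
V=ann_{E}(ann(V))=ann_{E}(ann(IE))=IE,
\]
which exhibits $V$ as $IE$ for some ideal $I$ of $A$. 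Since $V$ was arbitrary, $E$ is a multiplication module.

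There is really no serious obstacle: the proof rests entirely on matching Proposition \ref{prop2}(ii) (which converts the annihilator multiplication property from single elements up to arbitrary submodules) with the defining characterization of comultiplication modules that $V=ann_{E}(ann(V))$. The one small thing to be careful about is that Proposition \ref{prop2}(ii) does not claim $I$ is finitely generated, but that is harmless since the definition of a multiplication module only asks for \emph{some} ideal $I$ with $V=IE$.
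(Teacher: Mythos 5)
Your proof is correct and follows essentially the same route as the paper: the forward direction via Example \ref{prop1}(i), and the reverse direction by combining Proposition \ref{prop2}(ii) with the identity $V=ann_{E}(ann(V))$ applied to both $V$ and $IE$. Your closing remark about $I$ not needing to be finitely generated is a fair observation that the paper leaves implicit.
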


\begin{proof}
$\left(  \Rightarrow\right)  :\ $Follows from Example \ref{prop1}.\\
$\left(  \Leftarrow\right)  :$\ Let $E\ $be a comultiplication and
annihilator multiplication module. Then by Proposition \ref{prop2}\ $(ii)$,
for every submodule $V\ $of $E$ there exists an ideal $I\ $of $A$ such that
$ann(V)=ann(IE).\ $As $E\ $is a comultiplication module, we have
$V=ann_{E}(ann(V))=ann_{E}(ann(IE))=IE$ which completes the proof.
\end{proof}

According to \cite{Beh}, a proper submodule $V$ of $E$ is defined as a \textit{classical prime submodule} if, whenever $abe \in V$ for some $a, b \in A$ and $e \in E$, it follows that $ae \in V$ or $be \in V$. Furthermore, $V$ is called a \textit{classical 1-absorbing prime} if, for some nonunits $a, b, c \in A$ and $e \in E$, the condition $abce \in V$ implies that $abe \in V$ or $ce \in V$ \cite{Zeynep}. One can easily see that every classical prime is also a classical 1-absorbing prime; however, the converse does not generally hold. The following example illustrates this distinction.

\begin{example}\label{exampleclassical}
	Consider the $k[[X]]$-module $E=k[[X]]/(X^2)$ where $k$ is a field and $X$ is an indeterminate over $k$. Let $V$ be the zero submodule of $E$. Then $V$ is not a classical prime submodule, since $X^2\overline{1}=\overline{0}\in V$ and $X\overline{1}=\overline{X}\notin V$. Let $abc\overline{e}=\overline{0}$ for some nonunits $a,b,c\in\ k[[X]]$ and $\overline{e}\in E$.  Since $k[[X]]$ is a local ring with a unique maximal ideal $(X)$,  we have $a=Xf$, $b=Xg$ and $c=Xh$ for some $f,g,h\in k[[X]]$. This gives $ab\overline{e}=X^2fg\overline{e}=\overline{0}\in V$, and so $V$ is a classical 1-absorbing prime submodule of $E$. 
\end{example}

\begin{proposition}\label{pclass}
	Let $E$ be an annihilator multiplication module and $ann(e)\neq ann(E)$ for every $e\in E$. Then the zero submodule of $E$ is a classical prime submodule if and only if it is a classical 1-absorbing prime.
\end{proposition}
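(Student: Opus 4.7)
The forward direction (classical prime implies classical 1-absorbing) is the observation recorded just above the proposition, so I will concentrate on the converse. My plan is to reduce to Proposition \ref{1-abs}: assuming $(0)$ is classical 1-absorbing prime, I first upgrade this to the statement that $ann(E)$ is itself a 1-absorbing prime ideal of $A$, then invoke Proposition \ref{1-abs} together with the standing hypothesis $ann(e)\neq ann(E)$ to conclude that $ann(e)$ is a prime ideal for every nonzero $e\in E$, which is exactly the classical prime condition on $(0)$.

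The main step, and the one I expect to be the principal obstacle, is upgrading the submodule-level condition to the ring-ideal level. To do this, I would take nonunits $a,b,c\in A$ with $abc\in ann(E)$. For every $e\in E$ one then has $abce=0$, so the classical 1-absorbing hypothesis yields $abe=0$ or $ce=0$. This exhibits $E$ as the set-theoretic union of the two submodules $S_{1}=\{e\in E:abe=0\}$ and $S_{2}=\{e\in E:ce=0\}$. Applying the standard fact that a module is never the union of two proper submodules, I deduce that one of $S_{1},S_{2}$ must equal $E$, giving $ab\in ann(E)$ or $c\in ann(E)$; since $E\neq 0$ forces $ann(E)\neq A$, this shows $ann(E)$ is a 1-absorbing prime ideal of $A$.

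With the hypothesis of Proposition \ref{1-abs} now verified, I would apply that proposition to obtain, for every submodule $V$ of $E$, that either $ann(V)=ann(E)$ or $ann(V)$ is a prime ideal. Specializing to $V=Ae$ and using the identity $ann(Ae)=ann(e)$ together with the standing hypothesis $ann(e)\neq ann(E)$, I would conclude that $ann(e)$ is a prime ideal for each nonzero $e\in E$. From this the classical prime property follows at once: if $abe=0$ with $e\neq 0$, then $ab\in ann(e)$, and primality of $ann(e)$ gives $a\in ann(e)$ or $b\in ann(e)$, i.e., $ae=0$ or $be=0$; the case $e=0$ is trivial.
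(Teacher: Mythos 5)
Your proof is correct, and it takes a genuinely different route from the paper's. The paper argues directly at the level of the given element: from $abe=0$ it chooses a finitely generated proper ideal $I$ with $ann(e)=ann(IE)$ and proves $aIE=0$ by testing an arbitrary $z\in E$ against $z+e$ and invoking \cite[Theorem 2]{Zeynep}. You instead first promote the hypothesis to the ring level, showing that $ann(E)$ is a $1$-absorbing prime ideal because $E=\{e\in E: abe=0\}\cup\{e\in E: ce=0\}$ and a module is never the union of two proper submodules, and then you harvest the conclusion from Proposition \ref{1-abs}: since $ann(e)=ann(Ae)\neq ann(E)$ by the standing hypothesis, that proposition makes $ann(e)$ a prime ideal for every nonzero $e$, which is exactly classical primality of the zero submodule. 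Your argument is more modular, reusing the paper's own Proposition \ref{1-abs} in place of the external result from \cite{Zeynep}, and it isolates a reusable implication (classical $1$-absorbing primality of $(0)$ forces $ann(E)$ to be a $1$-absorbing prime ideal); it is also worth noting that its combinatorial core coincides with the paper's, since the $z$ versus $z+e$ trick there is precisely the standard proof that a group is not the union of two proper subgroups, which is the fact you quote. The only point to flag is that Proposition \ref{1-abs}, read literally, would also assert that $ann((0))=A$ is prime; you sidestep this degenerate case correctly by specializing only to $V=Ae$ with $e\neq 0$, where $ann(e)$ is proper.
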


\begin{proof}
	The if part follows from \cite[Proposition 1]{Zeynep}. For the only if part, assume that the zero submodule is a classical 1-absorbing prime submodule. Let $abe=0$ for some $a,b\in A$ and $e\in E$. We may assume that $a,b$ are nonunits of $A$. Since $E$ is an annihilator multiplication module, there exists a finitely generated ideal $I$ of $A$ such that $ann(e)=ann(IE)$. Since $ann(e)\neq ann(E)$, $I$ is a proper ideal of $A$. This gives $abIE=0$. If $be=0$, then we are done. So, assume that $be\neq 0$. Now, we will show that $aIE=0$. Choose an arbitrary $z\in E$ and assume that $aIz\neq 0$. Since $abIz=0$, by \cite[Theorem 2]{Zeynep}, we have $bz=0$. 
	On the other hand, since $abIe=0$ and $be\neq 0$, we have $aIe=0$. As $abI(z+e)=0$ and $b(z+e)=be\neq 0$, we conclude that $aI(z+e)=aIz=0$ which is a contradiction.
	Thus, we conclude that $aIE=0$, that is, $a\in ann(IE)=ann(e)$. This gives that $ae=0$, as required.
\end{proof}

\begin{remark}
	In the previous proposition, the condition $ann(e)\neq ann(E)$ is necessary. For example, consider the $k[[X]]$-module $E=k[[X]]/(X^2)$, where $k$ is a field and $X$ is an indeterminate over $k$. Then $E$ is clearly a multiplication module, so by Example \ref{prop1}, it is an annihilator multiplication module. However, note that $ann(\overline{1})=(X^2)=ann(E)$. Also, by Example \ref{exampleclassical}, $V=(\overline{0})$ is a classical 1-absorbing prime submodule which is not a classical prime submodule. 
\end{remark}

Recall from \cite{1ABS} that a proper ideal $I$ of $A\ $is said to be a \textit{1-absorbing prime ideal} if whenever $abc\in I$ for some nonunits $a,b,c\in A$, then $ab\in I$ or $c\in I$.

\begin{proposition}\label{1-abs}
	Let $E$ be an annihilator multiplication module over which $ann(E)\ $is a
	1-absorbing prime ideal of $A.\ $Then, for every submodule $V$ of $E$, either
	$ann(V)=ann(E)$ or $ann(V)$ is a prime ideal of $A$. In this case,
	$ann(V)$ is a 1-absorbing prime ideal of $A$. Furthermore, $\left\{
	ann(V):\ V\ \text{is a submodule of }E\right\}$ is totally ordered by inclusion.
\end{proposition}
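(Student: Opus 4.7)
The plan is to represent each $ann(V)$ as an ideal quotient of $P:=ann(E)$ and then exploit the $1$-absorbing prime hypothesis. By Proposition~\ref{prop2}(ii), for any submodule $V$ of $E$ there exists an ideal $I$ of $A$ with $ann(V)=ann(IE)$; moreover $ann(IE)$ is exactly the ideal quotient $(P:I)$, since $a\in ann(IE)$ iff $aI\subseteq ann(E)=P$. Assuming $ann(V)\neq P$, I fix $c\in(P:I)\setminus P$, so $c\notin P$ but $cI\subseteq P$. I will treat the degenerate case $V=0$ (equivalently $I\subseteq P$, equivalently $c$ a unit) separately; assume henceforth $V\neq0$, so $I\not\subseteq P$, fix $i_{0}\in I\setminus P$, and note $c$ must then be a nonunit. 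The relation $ci_{0}\in P$ with $c,i_{0}\notin P$ shows that $P$ is \emph{not} prime.

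The main structural step extracts the local-ring consequence of $P$ being a $1$-absorbing prime which fails to be prime. Choose witnesses $a,b\notin P$, necessarily both nonunits, with $ab\in P$. For any nonunit $z$, the relation $zab=(za)b\in P$ together with $1$-absorption and $b\notin P$ yields $za\in P$. If $A$ admitted two distinct maximal ideals $m_{1}\neq m_{2}$, writing $1=u+v$ with $u\in m_{1}$, $v\in m_{2}$ would give $a=au+av\in P$, a contradiction; so $A$ is local with unique maximal ideal $m$. Furthermore $z\cdot z\cdot a=z(za)\in P$ with $1$-absorption forces $z^{2}\in P$ or $a\in P$, and since $a\notin P$ we obtain $z\in\sqrt{P}$ for every nonunit $z$, so that $\sqrt{P}=m$ and $P$ is $m$-primary.

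With these tools I identify $(P:I)$ with $m$. For $(P:I)\subseteq m$: every $a\in(P:I)$ satisfies $ai_{0}\in P$ with $i_{0}\notin P$, so $m$-primarity gives $a\in\sqrt{P}=m$. For $m\subseteq(P:I)$: I apply $1$-absorption to $aic=a(ci)\in aP\subseteq P$ for arbitrary $a\in m$ and $i\in I$; since $a,i,c$ are all nonunits and $c\notin P$, we must have $ai\in P$, hence $aI\subseteq P$. Therefore $ann(V)=(P:I)=m$, a maximal (hence prime, hence $1$-absorbing prime) ideal. The total ordering is then immediate, as every $ann(V)$ lies in the chain $P\subseteq m\subseteq A$. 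The principal obstacle I foresee is ruling out a strictly intermediate value of $(P:I)$ between $P$ and $m$, which is exactly what the $1$-absorption computation $aic\in P$, crucially using the specific witness $c$, accomplishes.
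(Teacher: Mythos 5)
Your argument is correct and in fact yields a slightly stronger conclusion than the proposition, but it takes a genuinely different route from the paper's. The paper verifies primality of $ann(V)$ directly: writing $ann(V)=ann(IE)$ with $I$ proper, it takes $ab\in ann(V)$ with $a,b$ nonunits, applies the $1$-absorbing hypothesis to $aib\in ann(E)$ for each $i\in I$ (every $i$ being a nonunit) to conclude $aI\subseteq ann(E)$ or $b\in ann(E)$, hence $a\in ann(V)$ or $b\in ann(V)$; the total ordering is then handled by a separate case analysis on $ann(V+K)=ann(V)\cap ann(K)$, using that this intersection is either prime or forces $ann(E)$ to be semiprime and hence prime. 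You instead prove a structural dichotomy: if some nonzero $V$ has $ann(V)=(ann(E):I)\neq ann(E)$, then $ann(E)$ cannot be prime, whence $A$ is local with maximal ideal $m$, $ann(E)$ is $m$-primary, and the two inclusions $(ann(E):I)\subseteq m$ (via primariness and $i_{0}\notin ann(E)$) and $m\subseteq(ann(E):I)$ (via $1$-absorption on $a\cdot i\cdot c$) pin down $ann(V)=m$ exactly. This buys a sharper picture --- all annihilators lie in the chain $ann(E)\subseteq m\subseteq A$, so the total ordering is immediate --- at the cost of re-deriving the locality theorem for non-prime $1$-absorbing prime ideals, which is already available in the cited work of Yassine, Nikmehr and Nikandish. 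One shared caveat: for $V=(0)$ one gets $ann(V)=A$, which is neither $ann(E)$ nor a proper (hence prime) ideal; you flag this degenerate case but never return to it, though the paper's proof silently suffers from the same defect, so the statement should really be read for nonzero submodules.
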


\begin{proof}
	Suppose that $E$ is an annihilator multiplication module and $V$ is a
	submodule of $E$ such that $ann(V)\neq ann(E)$. Then there exists a proper ideal $I$ of $A$ such that $ann(V)=ann(IE)$ by Proposition \ref{prop2} (ii). Let $ab\in ann(V)$ for some
	$a,b\in A$. Then we have $abV=aIbE=0$ which implies that $aIb\subseteq ann(E)$. If $a$ or $b$ is unit, then we are done. So assume that $a,b$ are nonunits. Since $ann(E)$ is a 1-absorbing prime ideal, we conclude that $aI\subseteq ann(E)$ or $b\in ann(E)$. Then we have $a\in ann(IE)=ann(V)$ or $b\in ann(E)\subseteq ann(V)$. Thus, $ann(V)$ is a prime ideal of $A$, so is 1-absorbing prime. Let $V,K$ be two submodules of $E$ such that
	$ann(V)\neq ann(E)$ and $ann(K)\neq ann(E)$. Then by above, either we have $ann(V+K)=ann(E)$ or $ann(V+K)$ is a prime ideal of $A$. Now we have two cases. \textbf{Case 1: }Let $ann(V+K)=ann(E)$. Since $ann(V+K)=ann(V)\cap
	ann(K)$, we have $ann(V)\cap ann(K)=ann(E)$. By assumption, we get $ann(K)$ and $ann(V)$ are prime ideals of $A$ which implies that $\sqrt{ann(E)}%
	=\sqrt{ann(V)}\cap\sqrt{ann(K)}=ann(V)\cap ann(K)=ann(E)$. Since $ann(E)$ is a 1-absorbing prime ideal and semiprime, so is prime ideal. Then we have
	either $ann(V)\subseteq ann(K)$ or $ann(K)\subseteq ann(V)$. \textbf{Case 2:} Let $ann(V+K)\neq ann(E)$. In this case, $ann(V+K)=ann(V)\cap ann(K)$ is a prime ideal which implies that either $ann(V)\subseteq ann(K)$ or
	$ann(K)\subseteq ann(V)$. Hence, $\left\{  ann(V):\ V\ \text{is a submodule of
	}E\right\}$ is totally ordered by inclusion.
\end{proof}

Let $E$ be an $A$-module. Recall from \cite{FacSal} that $E$ is said to be a \textit{uniserial module} if its each two submodules are comparable, that is, $V\subseteq W$ or $W\subseteq V$ for every two submodules $V,W$ of $E$. For more details on uniserial modules, we refer \cite{FacSal} and \cite{Tekir} to the reader. 

\begin{corollary}\label{uniserial}
	Let $E$ be an annihilator multiplication module and a comultiplication module such that $Ann(E)$ is a 1-absorbing prime ideal of $A$. Then $E$ is a uniserial module. 
\end{corollary}

\begin{proof}
	Suppose that $E$ is an annihilator multiplication module and a comultiplication module. Let $V,W$ be two submodules of $E$. Then by Proposition \ref{1-abs}, we have either $ann(V)\subseteq ann(W)$ or $ann(W)\subseteq ann(V)$. Without loss of generality, we may assume that $ann(V)\subseteq ann(W)$. Since $E$ is a comultiplication module, we have $W=ann_E(ann(W))\subseteq ann_E(ann(V))=V$, that is $E$ is a uniserial module.
\end{proof}

According to \cite{Annin}, an $A$-module $E$ is defined as a \textit{prime module} if the zero submodule of $E$ is prime, or equivalently, $ann(V) = ann(E)$ for every nonzero submodule $V$ of $E$. Additionally, a submodule $V$ of $E$ is defined to be a \textit{second submodule} if, for each $a \in A$, either $aV = (0)$ or $aV = V$. In particular, $E$ is called a \textit{second module} if it is a second submodule of itself \cite{Yas}. An $A$-module $E$ is defined as \textit{injective} if, whenever
$f: L \rightarrow K$ is an injective homomorphism and $g: L \rightarrow E$ is an
arbitrary homomorphism, there exists a homomorphism $h: K \rightarrow
E$ such that $fh = g$ \cite{AnFul}. Furthermore, \cite[Theorem 2.3]{Yas} demonstrates
that a prime module $E$ is a second module if and only if it is an
injective module over the ring $A/ann(E)$.

\begin{proposition}\label{pinj}
Let $E\ $be a second module and annihilator multiplication module. Then
$E\ $is a prime module and thus every submodule $V\ $of $E\ $is an annihilator
multiplication module. In this case, $E\ $is an injective $A/ann(E)$-module. In particular, if $E$ is a faithful module, then $E$ is an injective $A$-module. Furthermore, nonzero pure submodules and second submodules of $E\ $coincide.
\end{proposition}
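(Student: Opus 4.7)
The plan is to prove the four claims in the order they appear. First I would show that $E$ is a prime module: fix a nonzero $e\in E$, and by the annihilator multiplication hypothesis write $ann(e)=ann(IE)$ for a finitely generated ideal $I=\sum_{i=1}^{n}Aa_{i}$. Since $E$ is a second module, each $a_{i}E$ equals either $(0)$ or $E$; if all were zero, then $IE=0$ and $ann(e)=A$, contradicting $e\neq 0$. Hence some $a_{i}E=E$, so $IE=E$ and $ann(e)=ann(E)$. This proves primeness of $E$, and in particular $ann(V)=ann(E)$ for every nonzero submodule $V$, so Corollary \ref{csub}(ii) immediately yields that every submodule of $E$ is an annihilator multiplication module.

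Next I would invoke \cite[Theorem 2.3]{Yas}: a prime module that is also second is injective over $A/ann(E)$. Applied to $E$ itself, which is prime by the previous step and second by hypothesis, this gives that $E$ is an injective $A/ann(E)$-module; when $E$ is faithful, $ann(E)=0$ and so $A/ann(E)=A$, yielding injectivity over $A$ itself. The same Yas characterization also applies to any second submodule $V$ of $E$: such a $V$ is prime, since every nonzero submodule $W\subseteq V$ satisfies $ann(W)=ann(E)=ann(V)$ by primeness of $E$, and second by assumption.

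For the final claim I would prove both implications directly. If $V$ is a nonzero pure submodule of $E$ and $a\in A$, then $aE\in\{(0),E\}$; in the first case $aV=0$, and in the second, purity at the principal ideal $(a)$ gives $V=aE\cap V=aV$, so $V$ is second. Conversely, if $V$ is a second submodule and $I$ is any ideal of $A$, then either $I\subseteq ann(V)=ann(E)$, so $IE=IV=0$, or there exists $a\in I$ with $a\notin ann(E)=ann(V)$, in which case second-ness forces $aE=E$ and $aV=V$, hence $IE=E$ and $IV=V$; in every case $IE\cap V=IV$, establishing purity. The main subtlety, rather than any individual calculation, is the reading of the clause ``$V$ is an injective $A/ann(E)$-module'': it must be applied to $V$'s that are themselves prime and second (namely $E$ itself, or via the last equivalence, nonzero pure submodules), since a generic submodule of a prime second module need not be second (e.g.\ $\mathbb{Z}\subset\mathbb{Q}$ over $\mathbb{Z}$) and therefore need not be injective over $A/ann(E)$.
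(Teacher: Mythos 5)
Your proof is correct and follows essentially the same route as the paper's: primeness via the finitely generated ideal $I$ together with second-ness of $E$, Corollary \ref{csub}(ii) for the annihilator multiplication claim, Yassemi's theorem for injectivity, and a direct two-way verification of the pure/second equivalence. Your caveat about the injectivity clause is well taken --- the paper's own proof likewise only establishes injectivity for $E$ itself (and hence, via the last equivalence, for second submodules), and your $\mathbb{Z}\subset\mathbb{Q}$ example correctly shows the claim cannot be meant for arbitrary submodules $V$.
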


\begin{proof}
Suppose that $E\ $is a second submodule and annihilator multiplication module.
Now, we will show that $ann(e)=ann(E)\ $for every $0\neq e\in E.\ $Choose
$0\neq e\in E.\ $As $E\ $is an annihilator multiplication module, there exists
a finitely generated ideal $I\ $of $A\ $such that $ann(e)=ann(IE).\ $Since
$I\ $is finitely generated, there exists $a_{1},a_{2},\ldots,a_{n}\in I\ $such
that $I=%
{\textstyle\sum\limits_{i=1}^{n}}
Aa_{i}.\ $This implies that $ann(e)=ann(IE)=%
{\textstyle\bigcap\limits_{i=1}^{n}}
ann(a_{i}E).\ $As $e\neq0,\ $there exists $t\in\{1,2,\ldots,n\}$ such that
$a_{t}E\neq0.\ $Otherwise, we would have $ann(e)=A\ $which implies that $e=0,$
a contradiction. Since $E\ $is a second module, $a_{t}E=E$ which yields that
\begin{align*}
ann(E)  &  =ann(a_{t}E)\subseteq ann(e)\\
&  =%
{\textstyle\bigcap\limits_{i=1}^{n}}
ann(a_{i}E)\subseteq ann(a_{t}E)=ann(E).
\end{align*}
Then we have $ann(e)=ann(E)$. This implies that $ann(V)=ann(E)$ for every nonzero submodule $V$ of $E$. Thus, $E\ $is a prime
module. Furthermore, by Corollary \ref{csub} $(ii)$, $V\ $is an annihilator
multiplication module for every nonzero submodule $V\ $of $E.\ $Since $E$ is prime and second module, by \cite[Theorem 2.3]{Yas}, $E\ $is an injective
$A/ann(E)$-module. For the rest, assume that $V\ $is a second submodule of
$E.\ $Let $J\ $be an ideal of $A.\ $Since $V\ $is a second submodule, either
$JV=V\ $or $JV=(0).\ $If $JV=(0),\ $then $J\subseteq ann(V)=ann(E)\ $which
implies that $JE\cap V=(0)=JV.\ $Now, assume that $JV=V.\ $Then we have
$JE\neq(0)\ $since $ann(V)=ann(E)$. As $E\ $is a second module, we have
$JE=E\ $which implies that $JE\cap V=V=JV.\ $Thus, $V\ $is a pure submodule.
For the converse, assume that $V\ $is a nonzero pure submodule of $E\ $and
$a\in A.\ $Then we have $aV=aE\cap V.\ $Without loss of generality, we may
assume that $aV\neq0.\ $As $ann(V)=ann(E)$ and $E\ $is second$,\ $we have
$aE\neq(0)\ $and so $aE=E.\ $This gives $aE\cap V=V=aV,\ $that is, $V\ $is a
second submodule of $E.\ $
\end{proof}

It is well known that a ring $A$ is von Neumann regular if and only if $A$ is reduced (i.e, a ring without a nonzero nilpotent) and every prime ideal is maximal. Now, we characterize Noetherian von Neumann regular rings in terms of annihilator multiplication modules.

\begin{theorem}\label{tvon}
Let $A$ be a ring. The following statements are equivalent.

(i)\ $A\ $is a reduced ring and every faithful $A$-module is an annihilator
multiplication module.

(ii)\ $A\ $is a principal ideal ring and von Neumann regular ring.

(iii) $A$ is a Noetherian von Neumann regular ring.

(iv) $A\ $is a direct product of finitely many fields.
\end{theorem}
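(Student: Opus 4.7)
The plan is to prove the cycle $(iii) \Rightarrow (ii) \Rightarrow (i) \Rightarrow (iii)$. The equivalence $(ii) \Leftrightarrow (iii)$ is essentially classical: $(iii) \Rightarrow (ii)$ because a finite product of fields is visibly von Neumann regular (componentwise) and a principal ideal ring (every ideal is the sum of some of the fields, generated by the corresponding component-idempotent); the converse follows since a principal ideal ring is Noetherian, and a Noetherian von Neumann regular ring is semisimple Artinian, which for commutative rings is a finite product of fields.

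For $(ii) \Rightarrow (i)$, $A$ is reduced since von Neumann regular rings are. Given a faithful $A$-module $E$ and $e \in E$, the ideal $ann(e)$ is principal, and in a von Neumann regular ring every principal ideal is generated by an idempotent, so $ann(e) = (1-f)A$ for some idempotent $f \in A$. Setting $J = fA$ (finitely generated), one checks $ann(J) = (1-f)A = ann(e)$, and faithfulness of $E$ yields $ann(JE) = ann(J)$, so $ann(e) = ann(JE)$.

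The main work is $(i) \Rightarrow (iii)$, which I would split into two stages. \textbf{Stage 1 (von Neumann regularity).} For any prime $P$ of $A$, the module $A \oplus A/P$ is faithful (since $A$ is), hence annihilator multiplication by $(i)$. Applied to $(0, 1+P)$ with annihilator $P$, this yields a finitely generated $J$ with $P = ann(J) \cap (P : J)$. A short case analysis (if $J \subseteq P$ then $(P : J) = A$ and $ann(J) = P$; if $J \not\subseteq P$ then $(P : J) = P$ and $P \subseteq ann(J)$), together with the identity $ann(J) = \bigcap_{Q \text{ minimal prime},\, J \not\subseteq Q} Q$ valid in reduced rings, shows $P$ is contained in every member of a nonempty family of minimal primes; since the inclusion of a prime in a minimal prime forces equality, $P$ is itself minimal. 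Hence $A$ is zero-dimensional reduced, i.e., von Neumann regular, and every finitely generated ideal is principal and generated by an idempotent. Repeating the argument with the faithful module $\bigoplus_i A/P_i$ (where $\{P_i\}_{i \in I}$ lists the minimal primes and $\bigcap_i P_i = 0$), the element $1 + P_i$ has annihilator $P_i$, so $P_i = ann(J_i) = (1-f_i)A$ for an idempotent $f_i$; a support calculation shows $f_i \notin P_i$ and $f_i \in P_j$ for all $j \neq i$.

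\textbf{Stage 2 (finitely many primes).} Suppose for contradiction that $I$ is infinite and form $K = \sum_{i \in I} f_i A$. Via the subdirect embedding $A \hookrightarrow \prod_i A/P_i$, one verifies that $K$ consists precisely of the elements of $A$ with finite support among $\{P_i\}$; so $K$ is a proper ideal that is not principal, since any generator $h$ of $K$ would have to lie outside every $P_i$ (for each $f_i$ to be a multiple of $h$) while $h \in K$ itself forces $h$ to have finite support, impossible when $I$ is infinite. The faithful module $A \oplus A/K$ is annihilator multiplication by $(i)$, and $(0, 1+K)$ has annihilator $K$, producing a finitely generated $J$ with $K = ann(JE) = ann(J)$ by faithfulness. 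But in the von Neumann regular $A$, $ann(J)$ is principal, contradicting the non-principality of $K$. Hence $I$ is finite, and by the Chinese Remainder Theorem $A \cong \prod_{i=1}^{n} A/P_i$, a finite product of fields. I expect the main obstacle to be Stage 2: constructing $K$ and proving it is not principal, which requires careful handling of supports; the rest is routine bookkeeping in reduced and von Neumann regular settings.
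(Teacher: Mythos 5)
Your proposal is correct. The backbone coincides with the paper's: the test module $A\oplus(A/I)$ for an ideal $I$, the observation that faithfulness turns $ann(JE)$ into $ann(J)$ (since $ann(JE)=(ann(E):J)$), and the reduced-ring fact that a prime which is an annihilator of a (finitely generated) ideal must be a minimal prime -- the paper cites Henriksen--Jerison for this, you use the identity $ann(J)=\bigcap\{Q\ \text{minimal}:J\not\subseteq Q\}$, which amounts to the same thing. Where you genuinely diverge is the endgame. The paper proves $(i)\Rightarrow(ii)$ outright: since \emph{every} ideal $I$ satisfies $I=ann(J)=\bigcap_{i=1}^{n}ann(x_{i})=\bigcap_{i=1}^{n}(1-b_{i})A=\left((1-b_{1})\cdots(1-b_{n})\right)$ with the $b_{i}$ idempotent, $A$ is a principal ideal von Neumann regular ring, and $(iii)$ then falls out of $(ii)$ by Noetherian $+$ zero-dimensional $\Rightarrow$ Artinian $+$ CRT. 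You instead go directly $(i)\Rightarrow(iii)$ by contradiction, building the ideal $K=\sum_{i}f_{i}A$ of finite-support elements from an infinite family of minimal primes and showing it cannot be principal, yet must equal $ann(J)$ for finitely generated $J$. This works, and the support analysis is carried out correctly ($f_{i}\notin P_{i}$, $f_{i}\in P_{j}$ for $j\neq i$ follow from minimality of the $P_{j}$), but it is more labor than necessary: once you know that every ideal is $ann(\text{f.g.})$ and $A$ is von Neumann regular, principality of all ideals is immediate, and your Stage 2 is a special case of that general statement. A second small simplification: in Stage 1 the case analysis on $J\subseteq P$ versus $J\not\subseteq P$ is avoidable, since faithfulness of $A\oplus(A/P)$ already gives $P=ann(J\cdot E)=ann(J)$ in one step. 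Your $(ii)\Rightarrow(i)$ and the classical equivalence $(ii)\Leftrightarrow(iii)$ match the paper's treatment.
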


\begin{proof}
$(i)\Rightarrow(ii):\ $Suppose that $A\ $is a reduced ring and every faithful
$A$-module is an annihilator multiplication module. Let $I$ be an ideal of
$A$, and put $E=A\oplus\left(  A/I\right)  .$\ Note that $E\ $is a faithful module, by the assumption, $E\ $is an annihilator multiplication
module. This implies that $ann((0,\overline{1}))=I=ann(JE)=ann(J)$ for some
finitely generated ideal $J$ of $A.\ $Then there exist $x_{1},x_{2}%
,\ldots,x_{n}\in J$ such that $J=%
{\textstyle\sum\limits_{i=1}^{n}}
Ax_{i}.\ $This implies that $I=%
{\textstyle\bigcap\limits_{i=1}^{n}}
ann(x_{i}).\ $Now, take a prime ideal $P$ of $A.\ $Then we have $P=%
{\textstyle\bigcap\limits_{i=1}^{n}}
ann(y_{i})$ for some $y_{1},y_{2},\ldots,y_{n}\in A.\ $Since $P\ $is prime, we
conclude that $P=ann(y_{i})$ for some $y_{i}\in A.\ $As $A\ $is a reduced
ring, by \cite[Lemma 2.8]{HenJe}, $P\ $is a minimal prime ideal of $A.\ $Thus,
every prime ideal of $A\ $is maximal, so $A\ $is a von Neumann regular ring.
Now, we will show that $A\ $is a principal ideal ring. Let $I\ $be an ideal of
$A.\ $Then by above, $I=%
{\textstyle\bigcap\limits_{i=1}^{n}}
ann(x_{i}).\ $Since $A\ $is a von Neumann regular ring, every principal ideal
$(x_{i})=(b_{i})\ $is generated by an idempotent $b_{i}\in A.\ $Thus, we
conclude that $ann(x_{i})=(1-b_{i}),\ $where $1-b_{i}\ $is again an idempotent
of $A.\ $Since $(b)\cap(c)=(bc)\ $for every two idempotents $b,c\in A,\ $we
obtain
\begin{align*}
I  &  =%
{\textstyle\bigcap\limits_{i=1}^{n}}
ann(x_{i})=%
{\textstyle\bigcap\limits_{i=1}^{n}}
(1-b_{i})\\
&  =((1-b_{1})(1-b_{2})\cdots(1-b_{n}))
\end{align*}
which is a principal ideal. Thus, $A\ $is a principal ideal von Neumann
regular ring.

$\left (ii\right) \Rightarrow\left(iii\right):$ It is straightforward.

$\left(iii\right)\Leftrightarrow\left(iv\right):$ Follows from \cite[Corollary 2.6]{Khalfi1}.

$\left(iv\right)\Rightarrow\left(ii\right):$ It is clear.

$(ii)\Rightarrow(i):\ $Assume that $A\ $is a principal ideal ring and von
Neumann regular ring. Then $A$ is reduced. Now, choose a faithful $R$-module
$E,\ $and $e\in E.\ $Since $A\ $is a principal ideal ring, $ann(e)=(a)\ $for
some $a\in A.\ $As\ $A\ $is a von Neumann regular ring, there exists an
idempotent $b\in A\ $such that $(a)=(b)=ann(1-b).\ $As $E\ $is a faithful
module, we get $ann(e)=(b)=ann((1-b)E)$, where $(1-b)\ $is a finitely
generated ideal. Thus, $E\ $is an annihilator multiplication module.
\end{proof}

Let $E$ be an $A$-module. An ideal $P$ of $A$ is defined as an \textit{associated prime} of $E$ if $P$ is a prime ideal and $P=ann(e)$ for some $0\neq e\in E$. The set of all associated primes of $E$ is denoted by $Ass_{A}(E)$ \cite{Lam}. In this context, $Ass(A)$ denotes $Ass_{A}(A)$. Associated prime ideals correspond to the irreducible components underlying a module or ideal. These ideals play a fundamental role in primary decomposition, establish connections between algebraic structures and the geometry of varieties, and are crucial in the study of dimension and depth theory. The following result presents the relationships between $Ass_{A}(E)$ and $Ass(A)$ in terms of annihilator multiplication modules.

Let $E$ be an $A$-module. The set of all torsion elements of $E$ is denoted by $T(E)=\{e\in E:ann(e)\neq0\}$. According to \cite{Chun2}, $E$ is said to be a \textit{torsion module} if $T(E)=E$; otherwise, $E$ is called a \textit{non-torsion module}.

\begin{theorem}\label{tassoc}
(i)\ Let $E\ $be an annihilator multiplication faithful $A$-module. Then $Ass_{A}(E)\subseteq Ass(A).$

(ii) Let $E\ $be an annihilator multiplication $A$-module which is also non-torsion. Then $Ass(A)=Ass_{A}(E).\ $
\end{theorem}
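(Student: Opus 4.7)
The plan is to handle the two parts in the natural order, with (i) doing the real work and (ii) following by combining (i) with a short direct argument for the reverse inclusion.

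For part (i), I would begin by fixing $P\in Ass_{A}(E)$, so that $P$ is a prime ideal with $P=ann(e)$ for some nonzero $e\in E$. Using that $E$ is an annihilator multiplication module, pick a finitely generated ideal $I$ of $A$ with $ann(e)=ann(IE)$. Since $E$ is faithful, the standard argument $ann(IE)=ann(I)$ applies: if $a\in ann(IE)$, then $aI\subseteq ann(E)=(0)$, and the reverse inclusion is obvious. Writing $I=\sum_{i=1}^{n}Aa_{i}$, we get
\[
P=ann(I)=\bigcap_{i=1}^{n}ann(a_{i}).
\]
At this point comes what I expect to be the decisive step: since $P$ is prime and
\[
ann(a_{1})\,ann(a_{2})\cdots ann(a_{n})\subseteq\bigcap_{i=1}^{n}ann(a_{i})=P,
\]
primality forces $ann(a_{j})\subseteq P$ for some $j$; combined with the inclusion $P\subseteq ann(a_{j})$ (coming from the intersection), we obtain $P=ann(a_{j})$. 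Since $P$ is proper, $a_{j}\neq 0$, so $P\in Ass(A)$.

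For part (ii), the assumption that $E$ is non-torsion yields some $e_{0}\in E$ with $ann(e_{0})=(0)$, and in particular $ann(E)\subseteq ann(e_{0})=(0)$, so $E$ is faithful. Thus part (i) gives the inclusion $Ass_{A}(E)\subseteq Ass(A)$ for free. For the reverse inclusion, let $P\in Ass(A)$, say $P=ann(a)$ for some nonzero $a\in A$. Since $ann(e_{0})=(0)$, we have $ae_{0}\neq 0$, and for any $b\in A$ the chain $bae_{0}=0\iff ba=0\iff b\in ann(a)=P$ gives $ann(ae_{0})=P$. Hence $P\in Ass_{A}(E)$, completing $Ass(A)\subseteq Ass_{A}(E)$ and therefore equality.

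The only step I anticipate needing care with is the prime-intersection reduction in (i); the rest is essentially manipulation of annihilators together with the faithfulness/non-torsion hypotheses. Notably, the annihilator multiplication hypothesis is used only in the forward direction of (ii) (through (i)); the reverse inclusion $Ass(A)\subseteq Ass_{A}(E)$ only uses the existence of a nonzero element of $E$ with trivial annihilator.
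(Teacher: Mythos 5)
Your proposal is correct and follows essentially the same route as the paper: both parts hinge on writing $P=ann(e)=ann(IE)$ as a finite intersection of annihilators and invoking primality of $P$ to extract a single term, with the reverse inclusion in (ii) obtained from a non-torsion element exactly as you describe. The only cosmetic difference is that you apply faithfulness first (reducing to $ann(a_i)$) and then the prime-intersection step, whereas the paper does the prime step at the level of $ann(a_iE)$ and applies faithfulness at the end.
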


\begin{proof}
$(i):\ $Let $E\ $be an annihilator multiplication faithful module. Choose $P\in Ass_{A}(E).\ $Then $P\ $is a prime ideal and there exists $0\neq e\in E$ such that $P=ann(e).\ $Since $E\ $is an annihilator multiplication
module, there exists a finitely generated ideal $I\ $of $A\ $such that $ann(e)=ann(IE).\ $Then we can write $I=%
{\textstyle\sum\limits_{i=1}^{n}}
Aa_{i}$ for some $a_{1},a_{2},\ldots,a_{n}\in I.\ $This implies that
\begin{align*}
P  &  =ann(e)=ann(IE)\\
&  =%
{\textstyle\bigcap\limits_{i=1}^{n}}
ann(a_{i}E).
\end{align*}
Since $%
{\textstyle\bigcap\limits_{i=1}^{n}}
ann(a_{i}E)\subseteq P$ and $P\ $is a prime ideal, there exists $t\in
\{1,2,\ldots,n\}$ such that $ann(a_{t}E)\subseteq P=%
{\textstyle\bigcap\limits_{i=1}^{n}}
ann(a_{i}E)\subseteq ann(a_{t}E),\ $that is, $P=ann(a_{t}E).\ $As $E\ $is a
faithful module, we have $P=ann(a_{t}E)=ann(a_{t}).\ $This gives $P\in
Ass(A),\ $that is, $Ass_{A}(E)\subseteq Ass(A).$

$(ii):\ $Since every non-torsion module is faithful, by $(i),\ $we have
$Ass_{A}(E)\subseteq Ass(A).$\ For the reverse inclusion, choose $P\in
Ass(A).\ $Then there exists $0\neq a\in A$ such that $P=ann(a)\ $is a prime
ideal of $A.\ $Since $E\ $is a non torsion module, there exists $0\neq e\in
E\ $such that $ann(e)=(0)$ which implies that $P=ann(a)=ann(ae),$ that is, $P\in Ass_{A}(E).\ $Then we have the equiality $Ass(A)=Ass_{A}(E).\ $

\end{proof}

The following example demonstrates why the conditions of "annihilator multiplication" or "faithful" are necessary for Theorem \ref{tassoc} (i) to hold. Also, it shows that the conditions "annihilator multiplication module" or "non-torsion module" in Theorem \ref{tassoc} (ii) can not be removed.

\begin{example}\label{exassociated}
\begin{itemize}
\item [(i)] Consider the $\mathbb{Z}$-module $E=\mathbb{Z}_p\oplus\mathbb{Z}$, where $p$ is a prime number. Then $E$ is not an annihilator multiplication module by Example \ref{ex2}. Also it is easy to check that $E$ is a faithful (even, a non-torsion) $\mathbb{Z}$-module and $Ass_{\mathbb{Z}}(E)=\{(0),p\mathbb{Z}\}\nsubseteq Ass(\mathbb{Z})=\{(0)\}$.
\item [(ii)] Consider the $\mathbb{Z}$-module $E=\mathbb{Z}_p$, where $p$ is a prime number. Then $E$ is a multiplication module, so by Example \ref{prop1}, it is an annihilator multiplication module. Also, $E$ is not a faithful module since $ann(E)=p\mathbb{Z}\neq 0$. However, $Ass_{\mathbb{Z}}(E)=\{p\mathbb{Z}\}$ and $Ass(\mathbb{Z})=\{(0)\}$ are not comparable.
\item [(iii)] Consider the $\mathbb{Z}$-module $E=\bigoplus_{i=1}^{\infty} \mathbb{Z}_p$ for some prime number $p$. Then by Proposition \ref{pdirectsum} and Example \ref{prop1}, $E$ is an annihilator multiplication module. However, $E$ is a torsion module. Also, one can easily verify that $Ass(\mathbb{Z})=\{(0)\}$ and $Ass_{\mathbb{Z}}(E)=\{p\mathbb{Z}\}$ are distinct.
\end{itemize}
\end{example}

While the examples provided in Example \ref{exassociated} highlight the significance of the annihilator multiplication property in the study of associated prime ideals, the necessity of the "non-torsion" condition in Theorem \ref{tassoc} (ii) remains an intriguing question. Specifically, if $E$ is assumed to be a faithful torsion annihilator multiplication module, it remains unknown whether the equality $Ass_{A}(E) = Ass(A)$ holds without any additional assumptions, as we have yet to find a counterexample. Consequently, we leave the validity of this assertion as an open problem for the reader. 

\begin{question}
Let $E$ be an annihilator multiplication faithful torsion $A$-module. Is it true that $Ass(A)=Ass_{A}(E)$ ?
\end{question}

\vspace{0.5cm}
\noindent\textbf{Declarations}
\newline
\noindent\textbf{Conflict of Interest:} The authors declare that they have no conflict of interest.\newline
\noindent\textbf{Data Availability Statement:} Data sharing is not applicable to this article as no datasets were generated or analyzed during the current study.

\end{document}